\documentclass[12pt,a4paper]{amsart}

\usepackage{amssymb}
\usepackage[numeric, abbrev, nobysame]{amsrefs}
\usepackage{amscd}

\def\frak{\mathfrak}
\def\Bbb{\mathbb}
\def\Cal{\mathcal}

\let\phi\varphi

\newcommand{\x}{\times}
\renewcommand{\o}{\circ}

\newcommand{\al}{\alpha}
\newcommand{\be}{\beta}
\newcommand{\ga}{\gamma}

\newcommand{\ka}{\kappa}
\newcommand{\la}{\lambda}
\newcommand{\om}{\omega}
\newcommand{\ph}{\phi}
\newcommand{\ps}{\psi}
\renewcommand{\th}{\theta}
\newcommand{\si}{\sigma}
\newcommand{\ze}{\zeta}
\newcommand{\Ga}{\Gamma}
\newcommand{\La}{\Lambda}
\newcommand{\Ph}{\Phi}
\newcommand{\Ps}{\Psi}
\newcommand{\Om}{\Omega}

\newcommand{\im}{\operatorname{im}}
\newcommand{\Ric}{\operatorname{Ric}}

\newcommand{\Alt}{\operatorname{Alt}}
\newcommand{\Ad}{\operatorname{Ad}}

\newcommand{\Euc}{\operatorname{Euc}}
\newcommand{\euc}{\mathfrak{euc}}

\newcommand{\tn}{\widetilde{\nabla}}
\newcommand{\hor}{\text{hor}}
\newcommand{\Fl}{\operatorname{Fl}}

\newcounter{theorem}
\numberwithin{theorem}{section}
\numberwithin{equation}{section}

\newtheorem{thm}[theorem]{Theorem}
\newtheorem*{thm*}{Theorem \thesubsection}
\newtheorem{lemma}[theorem]{Lemma}
\newtheorem{prop}[theorem]{Proposition}

\newtheorem*{lemma*}{Lemma \thesubsection}
\newtheorem*{prop*}{Proposition \thesubsection}
\newtheorem*{cor*}{Corollary \thesubsection}

\theoremstyle{definition}
\newtheorem{definition}[theorem]{Definition}
\newtheorem*{definition*}{Definition \thesubsection}

\newtheorem*{example*}{Example \thesubsection}
\theoremstyle{remark}
\newtheorem{remark}[theorem]{Remark}
\newtheorem*{remark*}{Remark \thesubsection}

\def\sideremark#1{\ifvmode\leavevmode\fi\vadjust{\vbox to0pt{\vss
 \hbox to 0pt{\hskip\hsize\hskip1em
 \vbox{\hsize3cm\tiny\raggedright\pretolerance10000
  \noindent #1\hfill}\hss}\vbox to8pt{\vfil}\vss}}}%
                        
\begin{document}
\renewcommand{\today}{} 

\title{A new construction of the\\ Riemannian deformation sequence }

\author{Andreas \v Cap}

\address{Faculty of Mathematics\\
  University of Vienna\\
  Oskar--Morgenstern--Platz 1\\
  1090 Wien\\
  Austria} 

\email{Andreas.Cap@univie.ac.at}

\date{July 22, 2026}

\subjclass{primary: 58J10; secondary: 53B20, 53C07, 58H15, 58J10, 58J60}

\keywords{linearized deformation theory of Riemannian manifolds; BGG sequence; Calabi
  complex; elasticity complex; K\"orner complex; Cartan geometry}

\begin{abstract}
  We obtain a new construction of a sequence of invariant differential operators on a
  Riemannian manifold $(M,g)$ that governs the linearized deformation theory of
  $g$. Starting from an explicit linear connection on a natural bundle $\Cal AM\to M$
  we construct a twisted de Rham sequence and then apply an analog of the
  construction of BGG sequences. If $g$ has constant sectional curvature, both
  sequences are complexes which compute the cohomology of the sheaf of local Killing
  fields, which are equivalent to parallel sections of $\Cal AM$.

  In a second step, we relate the construction to the description of $(M,g)$ as a
  (torsion-free) Cartan geometry $(\Cal OM,\om)$, where $\Cal OM$ is the orthonormal
  frame bundle of $M$. This provides a manifest relation of the twisted de Rham
  sequence to the deformation theory of the Cartan connection $\om$ (which is easier
  do deal with than the deformation theory of $g$). The BGG-like construction can
  then be nicely viewed as interpreting the linearized deformation theory of torsion
  free Cartan geometries in terms of the underlying Riemannian metric.
\end{abstract}

\maketitle

\pagestyle{myheadings}\markboth{Andreas \v Cap}{Riemannian deformation sequence}

\section{Introduction}\label{1}
The Riemannian deformation sequence is a sequence of natural differential operators
between sections of natural vector bundles over a Riemannian manifold $(M,g)$, which
governs the linearized deformation theory of $g$. It starts with the Killing operator
on vector fields, whose kernel consists of Killing vector fields, which are the
infinitesimal isometries of $g$. The next operator is related to the infinitesimal
change of curvature caused by an infinitesimal change of metric. The standard
reference for a construction of such a sequence is the 1961 article \cite{Calabi} of
E.\ Calabi. The construction there focuses on the case that $g$ has constant
sectional curvature, in which one obtains a complex which is shown to provide a fine
resolution of the sheaf of local Killing vector fields on $(M,g)$. Calabi's
construction (described as a ``tour de force of mathematical computation'' in
L.\ Auslander's MR review of the article) is based on the method of moving
frames. The definition of the operators in Calabi's sequence is motivated by
deformation theory and the main application is to computing the cohomology of the
sheaf of local Killing vector fields (in the constant-curvature case), which has a
direct interpretation in terms of deformation theory.

Calabi's construction was discussed and extended in the articles \cite{Rag} by
M.S.\ Raghunathan and \cite{BBL} by L.\ B\'erard-Bergery, J.-P.\ Burguignon and
J.\ Lafontaine, which unfortunately both seem not to be well-known. A major topic
building on Calabi's work is the study of deformations of symmetric spaces which was
initiated by J.\ Gasqui and H.\ Goldschmidt in \cite{GG} and continues to be a very
active area, see for example \cite{CELM} and the references therein. 

Apart from the obvious interest for Riemannian geometry, the deformation sequence for
domains in $\Bbb R^n$ with the flat metric, known as the \textit{Calabi complex}, is
also important in applied mathematics. This comes from its role in linear elasticity
theory, see e.g.\ \cite{AFW}, whence it is known as the \textit{elasticity complex}
in this area. The quest for efficient numerical methods for elasticity led to an
intense study of analytical properties of the Calabi complex in the applied math
community. While the connection to Riemannian geometry is visible in the elasticity
complex, the relation to deformation theory is of minor importance.

In the late 1990's and early 2000's a very surprising aspect of Calabi's sequence was
discovered. It turned out that in an appropriate interpretation, it can be obtained
as an instance of a Bernstein-Gelfand-Gelfand sequence (or BGG sequence) for the
underlying projective structure of the Riemannian metric $g$. This projective
structure is the equivalence class of all linear connections on the tangent bundle
$TM$ which have the same geodesics as the Levi-Civita connection of $g$ up to
parametrization. In an appropriate interpretation (involving densities), the
operators in the sequence have the same formula in terms of any of these
connections. This is discussed in \cite{Mike-Elasticity} and can be interpreted as a
stability result on the operators parallel to conformal invariance of differential
operators on Riemannian manifolds. Projective invariance gives a nice explanation for
the fact that one obtains a complex exactly in the case of constant-curvature
metrics, since by a classical result of Beltrami these are exactly the metrics for
which the underlying projective structure is flat. See also the article
\cite{Khavkine} which discusses the perspective of representation theory on Calabi's
constructions (without using BGG methods). This result has led to considerable
interest in the BGG construction and similar methods in the applied community.

Projective structures belong to the large class of geometric structures known as
\textit{parabolic geometries}, see \cite{book}, and there is a general construction
of BGG sequences for all these geometric structures originally developed in
\cite{CSS-BGG}. Besides the projective structure, a Riemannian structure determines
a second underlying parabolic geometry, namely a conformal structure. Hence
projective and conformal BGG sequences can be constructed on Riemannian manifolds and
things simplify considerably in this setting, see \cite{Riem-BGG} for a discussion.

Apart from the general background, the construction as a BGG sequence has immediate
conceptual advantages. BGG sequences are always constructed from twisted de Rham
sequences of differential forms with values in some vector bundle endowed with a
linear connection. In case this connection is flat, the de Rham sequence is well
known to be a complex and general results imply that then also the BGG sequence is a
complex which computes the same cohomology. Since all the constructions are local, it
immediately follows that then the BGG sequence is a fine resolution of the sheaf
defined by the kernel of the first operator in the sequence. It also turns out that
this sheaf is isomorphic to local parallel sections of the vector bundle used in the
initial twisted de Rham sequence. In contrast, proving that one obtains a resolution
takes up a substantial part of \cite{Calabi}.

There is an important drawback, both with Calabi's original construction and with the
construction as a projective BGG sequence. While the construction of the operators in
Calabi's sequence is motivated by deformation theory, there is no systematic
connection of the sequence to deformation theory. In particular, the infinitesimal
change of curvature induced by an infinitesimal change of metric is computed directly
in general, see p.\ 161 of \cite{Calabi}. Then the version for constant curvature in
equation (12) of the article used as a motivation for the definition of the second
operator in the sequence (see p.\ 170). Unfortunately, there is a sign error in the
passage from the general formula to the constant curvature version in formula
(12). So while all the results in \cite{Calabi} are correct, the interpretation of
the second operator as computing the infinitesimal change of curvature caused by an
infinitesimal change of metric is not. See Section 6.4 of \cite{CELM} for a
discussion of the difference between the two operators.  This problem was resolved in
\cite{BBL}, where the sequence involves the operator that computes the infinitesimal
change of curvature caused by an infinitesimal deformation of the metric. Still the
relation to deformations is only through explicit formulae in \cite{BBL}.

For the construction as a projective BGG sequence, the situation is even worse. In
the BGG version the sequence starts with a space of (weighted) one-forms, so to
identify these with vector fields as needed for deformation interpretations, one
already has to use the Riemannian metric and leave the projectively invariant
setting.  Even on the level of the Killing operator, the relation to deformation
theory is only available on a computational level. Since the BGG construction
recovers Calabi's sequence, its second operator also does not compute the
infinitesimal change of curvature caused by an infinitesimal change of metric (except
for flat metrics).

The main aim of this article is to provide an alternative construction of a
Riemannian deformation sequence which combines the advantages of the BGG construction
with a conceptual relation to deformation theory. This is based on the description of
Riemannian manifolds as Cartan geometries and similar to the construction in the
(much more complicated) situation of parabolic geometries in \cite{deformations}. We
give a direct construction of the sequence in Section \ref{2}. Given a Riemannian
manifold $(M,g)$, we consider the bundle $\frak o(TM)$ of endomorphisms of the
tangent bundle $TM$, which are skew symmetric with respect to $g$. We then define two
linear connections $\nabla^{\Cal A}$ and $\tilde\nabla^{\Cal A}$ on the bundle $\Cal
AM:=TM\oplus\frak o(TM)$ and compute their curvatures. In particular,
$\tilde\nabla^{\Cal A}$ is a flat connection if and only if $g$ has constant
sectional curvature. These connections can be extended to operators on differential
forms with values in $\Cal AM$, thus defining twisted de Rham sequences. If $g$ has
constant sectional curvature, then the sequence coming from $\tilde\nabla^{\Cal A}$
provides a fine resolution of the sheaf of local parallel sections of $\Cal AM$.

We next apply an analog of the simplified BGG construction from \cite{C-H:weak} and
\cite{Riem-BGG} to the twisted de Rham sequence defined by $\tilde\nabla^{\Cal
  A}$. This leads to natural differential operators $D_i$ acting between subbundles
$\Cal H_iM$ of $\Cal AM$. On the way, we construct splitting operators mapping
$\Ga(\Cal H_iM)$ to $\Om^i(M,\Cal AM)$ and we prove in Theorem \ref{thm2.5} that they
intertwine between the operators in the two sequences. We also show that if the
twisted de Rham sequence is a complex then so is the BGG-sequence $(\Ga(\Cal
H_*M),D_*)$ and both complexes compute the same cohomology. This also implies that
the sheaf of local parallel sections of $\Cal AM$ is canonically identified with the
sheaf of local Killing fields. From the construction, it is easy to derive explicit
formulae for the operators in the sequence.

The connection to deformation theory is made in Section \ref{3}. We start by briefly
reviewing the equivalent description of Riemannian manifolds as Cartan
geometries. The relation of the Cartan connection to the Levi-Civita connection
quickly leads to an interpretation of the connections $\nabla^{\Cal A}$ and
$\tilde\nabla^{\Cal A}$ in terms of the Cartan geometry. In particular,
$\tilde\nabla^{\Cal A}$ is induced by the Lie derivative of the Cartan connection,
which directly provides a connection to deformation theory. This leads to an
interpretation of the twisted de Rham sequence defined by $\tilde\nabla^{\Cal A}$ as
a deformation sequence on the level of Cartan geometries. The BGG construction
discussed above can then be interpreted in this picture as restricting to
torsion-free deformations and then passing from the Cartan geometry picture to the
picture of the underlying Riemannian metric. This in particular shows that the second
operator $D_1$ in the BGG sequence constructed in Section \ref{2} indeed computes the
infinitesimal change of curvature caused by an infinitesimal deformation of the
metric. The bundles $\Cal H_iM$ arise as associated bundles modelled on certain Lie
algebra cohomology spaces, a conceptual explanation of why this leads to the bundles
occurring in a projective BGG sequence is given in Remark \ref{rem3.5} (2).

\subsection*{Acknowledgements} I would like to thank Michael Eastwood for drawing my
attention to the issues with Calabi's construction in \cite{Calabi}, which motivated
me to start the work resulting in this article and for very helpful subsequent
discussions. I would also like to thank J.-P.\ Bourguignon, who pointed me to the
article \cite{BBL} and provided valuable input. This article is based upon work from
COST Action CaLISTA CA21109 supported by COST (European Cooperation in Science and
Technology). https://www.cost.eu.

\section{Direct construction of a deformation sequence}\label{2}
We start with an ad-hoc definition of a linear connection on a natural vector bundle
$\Cal AM$ over a Riemannian manifold $(M,g)$ and prove that it is flat if and only if
$g$ has constant sectional curvature. We then apply an analog of the construction of
BGG sequences to the induced twisted de Rham sequence of $\Cal AM$-valued
differential forms. This produces a sequence of natural differential operators acting
on sections of certain natural vector bundles over $M$, which we call a BGG
sequence. If $g$ has constant sectional curvature, the twisted de Rham sequence is a
fine resolution of the sheaf of local parallel sections of $\Cal AM$ and we easily
conclude that also the BGG sequence is a resolution of this sheaf, which is
isomorphic to the sheaf of kernels of its first operator. Explicit formulae for the
operators in the BGG sequence can be easily computed directly and this shows that the
first two operators in the sequence are the Killing operator and the operator
computing the infinitesimal change of curvature caused by an infinitesimal
deformation of the metric, respectively. Thus we recover the results of \cite{Calabi}
and \cite{BBL} with proofs that require much less computation and are more
conceptual, since the property of being a fine resolution is deduced directly from
the corresponding property of the twisted de Rham sequence. This also shows that the
relation to Cartan geometries that motivates the construction and is discussed in
Section \ref{3} below is not needed on a formal level.

\subsection{Some background on Riemannian geometry}\label{2.1}
Recall that natural vector bundles on Riemannian manifolds of dimension $n$ are in
bijective correspondence with representations of the Lie group $O(n)$ and
$O(n)$-equivariant linear maps between such representations induce natural bundle
maps between the corresponding bundles. This can most easily understood via forming
associated bundles to the orthonormal frame bundle, i.e.\ in the setting discussed in
Section \ref{3}. Since the Levi-Civita connection on a Riemannian manifold defines a
principal connection on the orthonormal frame bundle, there is an induced linear
connection on any vector bundle. We will refer to all these connections as the
Levi-Civita connection and denote them by $\nabla$. This description also immediately
implies that the bundle maps induced by $O(n)$-equivariant maps are parallel for the
Levi-Civita connection.

An important example is provided by the adjoint representation of $O(n)$ on its Lie
algebra $\frak o(n)$. On a Riemannian manifold $(M,g)$ of dimension $n$, the standard
representation of $O(n)$ on $\Bbb R^n$ corresponds to the tangent bundle $TM$, so the
bundle corresponding to the adjoint representation $\frak o(n)$ is the bundle $\frak
o(TM)$ of endomorphisms of $TM$, which are skew symmetric with respect to $g$.  Thus,
a $\binom11$-tensor field $\Ph$ is a section of the subbundle $\frak o(TM)\subset
T^*M\otimes TM$ if and only if the $\binom02$-tensor field $(\xi,\eta)\mapsto
g(\xi,\Ph(\eta))$ is skew symmetric.

Now for any representation $\Bbb W$ of $O(n)$, the infinitesimal representation of
$\frak o(n)$ can be interpreted as an $O(n)$-equivariant bilinear map $\frak
o(n)\times\Bbb W\to\Bbb W$. Denoting by $WM$ the natural vector bundle induced by
$\Bbb W$ for $(M,g)$, this induces a bilinear bundle map $\frak o(TM)\x WM\to WM$. We
denote all these bundle maps as well as the induced tensorial map on sections by
$\bullet$. So for $\Phi\in\Ga(\frak o(TM))$ and $\si\in\Ga(WM)$, we obtain
$\Phi\bullet\si\in\Ga(WM)$. The fact that $\bullet$ is parallel for the Levi-Civita
connection says that for any $\xi\in\frak X(M)$, we obtain
\begin{equation}\label{bull-diff}
\nabla_\xi (\Phi\bullet\si)=(\nabla_\xi\Phi)\bullet\si+\Phi\bullet\nabla_\xi\si. 
\end{equation}

The bundle maps $\bullet$ give a neat description of the curvature of the Levi-Civita
connection on a general natural vector bundle $WM$. Recall that the well-known
symmetries of the Riemann curvature tensor (which by definition encodes the curvature
of the Levi-Civita connection on $TM$) is an element $R\in\Om^2(M,\frak o(TM))$. For
the curvature of the Levi-Civita connection on $WM$, one then obtains for
$\xi_1,\xi_2\in\frak X(M)$ and $\si\in\Ga(WM)$ the equation
\begin{equation}\label{curv-W}
  \nabla_{\xi_1}\nabla_{\xi_2}\si-\nabla_{\xi_2}\nabla_{\xi_1}\si-\nabla_{[\xi_1,\xi_2]}\si=
  R(\xi_1,\xi_2)\bullet\si.
\end{equation}
Finally, we recall the two Bianchi identities that the Riemann curvature tensor $R$
satisfies. On the one hand, the first, or algebraic, Bianchi identity says that for
$\xi_1,\xi_2,\xi_3\in\frak X(M)$ one gets
\begin{equation}\label{Bianchi1}
R(\xi_1,\xi_2)(\xi_3)+R(\xi_3,\xi_1)(\xi_2)+R(\xi_2,\xi_3)(\xi_1)=0. 
\end{equation}
The second, or differential, Bianchi identity concerns the covariant derivative of the
Riemann curvature. For $\xi\in\frak X(M)$, the covariant derivative $\nabla_\xi R$ is
again an element of $\Om^2(M,\frak o(TM))$ and for $\xi_1,\xi_2,\xi_3\in\frak X(M)$,
one gets
\begin{equation}\label{Bianchi2}
(\nabla_{\xi_1}R)(\xi_2,\xi_3)+(\nabla_{\xi_3}R)(\xi_1,\xi_2)+(\nabla_{\xi_2}R)(\xi_3,\xi_1)=0.
\end{equation}

\subsection{Riemannian adjoint tractors}\label{2.2}
We introduce the central objects for this article here in an ad-hoc manner and verify
their properties by direct computations. The motivation for considering exactly these
objects and conceptual explanation for their properties will be given in Section
\ref{3.3} below, see in particular Theorem \ref{thm3.3}.

Let $(M,g)$ be a Riemannian manifold of dimension $n\geq 2$. We define the Riemannian
adjoint tractor bundle $\Cal AM:=TM\oplus\frak o(TM)$. We will write sections of
$\Cal AM$ as vectors $\binom{\eta}{\Phi}$ for a vector field $\eta\in\frak X(M)$ and
a section $\Ph$ of $\frak o(TM)$. We can use the Levi-Civita connection $\nabla$ of
$g$ to differentiate sections of $\Cal AM$ component-wise. Next we define two
modifications of this linear connection on $\Cal AM$.

\begin{definition}\label{def2.2}
We define the \textit{adjoint tractor connection} $\nabla^{\Cal A}$ and the
\textit{deformation connection} $\tn^{\Cal A}$ by the following formulae with
$\xi,\eta\in\frak X(M)$ and $\Phi\in\Ga(\frak o(TM))$:
\begin{equation}\label{conn-def}
\nabla^{\Cal A}_\xi\begin{pmatrix} \eta\\ \Phi \end{pmatrix}:=\begin{pmatrix}
\nabla_\xi\eta-\Phi(\xi)\\ \nabla_\xi\Phi \end{pmatrix} \qquad 
\tn^{\Cal A}_\xi\begin{pmatrix} \eta\\ \Phi \end{pmatrix}:=\begin{pmatrix}
\nabla_\xi\eta-\Phi(\xi)\\ \nabla_\xi\Phi-R(\xi,\eta) \end{pmatrix}
\end{equation}
\end{definition}

The basic properties of these operations are easy to obtain.

\begin{prop}\label{prop2.2}
Both $\nabla^{\Cal A}$ and $\tn^{\Cal A}$ define linear connections on the bundle
$\Cal AM$ that are canonically associated to $(M,g)$. Denoting their curvatures by
$R^{\Cal A}$ and $\widetilde{R}^{\Cal A}$, respectively, we obtain
$$
R^{\Cal A}(\xi_1,\xi_2)\begin{pmatrix} \eta\\ \Phi \end{pmatrix}=\begin{pmatrix}
R(\xi_1,\xi_2)(\eta) \\ R(\xi_1,\xi_2)\bullet \Phi \end{pmatrix}
$$

$$ \widetilde{R}^{\Cal A}(\xi_1,\xi_2)\begin{pmatrix}
  \eta\\ \Phi \end{pmatrix}=\begin{pmatrix} 0 \\ (\nabla_\eta
R)(\xi_1,\xi_2)-(\Phi\bullet R)(\xi_1,\xi_2)\end{pmatrix}
$$
In particular, the connection $\nabla^{\Cal A}$ is flat if and only if $g$ is flat,
while $\tn^{\Cal A}$ is flat if and only if $g$ has constant sectional curvature.
\end{prop}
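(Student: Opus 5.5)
Both operators differ from the component-wise Levi-Civita connection $\nabla$ on $\Cal AM$ by tensorial terms, so each is a linear connection. Explicitly, $\nabla^{\Cal A}_\xi-\nabla_\xi$ is the bundle map $\binom{\eta}{\Phi}\mapsto\binom{-\Phi(\xi)}{0}$, and $\tn^{\Cal A}_\xi-\nabla^{\Cal A}_\xi$ is $\binom{\eta}{\Phi}\mapsto\binom{0}{-R(\xi,\eta)}$. Both maps are $C^\infty(M)$-linear in $\xi$ and in the section, and they are built only from $g$, $\nabla$ and $R$. Hence the operators are natural, i.e.\ canonically associated to $(M,g)$.

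For the curvature of $\nabla^{\Cal A}$, compute $\nabla^{\Cal A}_{\xi_1}\nabla^{\Cal A}_{\xi_2}-\nabla^{\Cal A}_{\xi_2}\nabla^{\Cal A}_{\xi_1}-\nabla^{\Cal A}_{[\xi_1,\xi_2]}$ on $\binom{\eta}{\Phi}$.
\begin{itemize}
\item The bottom component is just the curvature of $\nabla$ on $\frak o(TM)$, which by \eqref{curv-W} is $R(\xi_1,\xi_2)\bullet\Phi$.
\item The top component equals $R(\xi_1,\xi_2)(\eta)$ plus the terms
$$-\nabla_{\xi_1}(\Phi(\xi_2))-(\nabla_{\xi_1}\Phi)(\xi_2)+\nabla_{\xi_2}(\Phi(\xi_1))+(\nabla_{\xi_2}\Phi)(\xi_1)+\Phi([\xi_1,\xi_2]).$$
Expanding with the Leibniz rule and using torsion-freeness $\nabla_{\xi_1}\xi_2-\nabla_{\xi_2}\xi_1=[\xi_1,\xi_2]$, the $\nabla\Phi$ terms cancel pairwise and the remaining terms cancel against $\Phi([\xi_1,\xi_2])$.
\end{itemize}

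For $\tn^{\Cal A}$, repeat the computation with the extra term $-R(\xi,\eta)$ in the bottom slot. The top component picks up $-(-R(\xi_2,\eta))(\xi_1)+(-R(\xi_1,\eta))(\xi_2)$ on top of the previous value $R(\xi_1,\xi_2)(\eta)$. This gives $R(\xi_1,\xi_2)(\eta)+R(\xi_2,\eta)(\xi_1)+R(\eta,\xi_1)(\xi_2)$, which vanishes by \eqref{Bianchi1}.

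The bottom component is the step I expect to need the most bookkeeping. It consists of:
\begin{itemize}
\item $R(\xi_1,\xi_2)\bullet\Phi$;
\item the terms $-\nabla_{\xi_1}(R(\xi_2,\eta))+\nabla_{\xi_2}(R(\xi_1,\eta))+R([\xi_1,\xi_2],\eta)$;
\item the cross terms $-R(\xi_1,\nabla_{\xi_2}\eta-\Phi(\xi_2))+R(\xi_2,\nabla_{\xi_1}\eta-\Phi(\xi_1))$.
\end{itemize}
Expand $\nabla_{\xi_1}(R(\xi_2,\eta))=(\nabla_{\xi_1}R)(\xi_2,\eta)+R(\nabla_{\xi_1}\xi_2,\eta)+R(\xi_2,\nabla_{\xi_1}\eta)$, and similarly for the other derivative. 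The $\nabla\eta$ terms cancel, and the $\nabla\xi$ terms cancel with $R([\xi_1,\xi_2],\eta)$ by torsion-freeness. What remains of the $\nabla R$ part is $-(\nabla_{\xi_1}R)(\xi_2,\eta)-(\nabla_{\xi_2}R)(\eta,\xi_1)$, which equals $(\nabla_\eta R)(\xi_1,\xi_2)$ by \eqref{Bianchi2}. The $\Phi$ terms are
$$R(\xi_1,\xi_2)\bullet\Phi+R(\xi_1,\Phi(\xi_2))+R(\Phi(\xi_1),\xi_2).$$
The adjoint action is $R(\xi_1,\xi_2)\bullet\Phi=[R(\xi_1,\xi_2),\Phi]$, and $(\Phi\bullet R)(\xi_1,\xi_2)=[\Phi,R(\xi_1,\xi_2)]-R(\Phi\xi_1,\xi_2)-R(\xi_1,\Phi\xi_2)$ is the natural action on $\Lambda^2T^*M\otimes\frak o(TM)$. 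Hence these terms equal $-(\Phi\bullet R)(\xi_1,\xi_2)$, and the sign conventions must be checked carefully here.

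For the flatness statements:
\begin{itemize}
\item If $g$ is flat, then $R^{\Cal A}=0$. Conversely, the top component of $R^{\Cal A}$ is $R$ itself, so $R^{\Cal A}=0$ forces $R=0$.
\item Suppose $\widetilde R^{\Cal A}=0$. Taking $\eta=0$ gives $\Phi\bullet R=0$ for all $\Phi$, i.e.\ $R$ is $O(n)$-invariant in $\Lambda^2\otimes\frak o(n)$ pointwise. Taking $\Phi=0$ gives $\nabla R=0$.
\item Algebraic curvature tensors invariant under $\frak o(n)$ are multiples of $R_0(\xi_1,\xi_2)\eta=g(\xi_2,\eta)\xi_1-g(\xi_1,\eta)\xi_2$. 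This follows by decomposing into Weyl, tracefree Ricci and scalar parts, of which only the last contains trivial summands; for $n=2$ it is automatic. So $R=fR_0$, i.e.\ $g$ has pointwise constant sectional curvature.
\item Conversely, if $R=kR_0$ with constant $k$, then $R_0$ is built from $g$, so it is invariant and parallel, and thus $\widetilde R^{\Cal A}=0$.
\end{itemize}

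In the forward direction, $\nabla R=0$ forces $f$ to be constant, since $\nabla R=df\otimes R_0$ and $R_0\neq0$; no appeal to Schur's lemma is needed. In the converse, a merely pointwise-constant $f$ would not suffice, so constancy of $k$ is used there.
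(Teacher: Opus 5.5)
Your proposal is essentially the paper's proof. Both evaluate the curvature of the component-wise Levi-Civita connection plus its tensorial corrections, use \eqref{Bianchi1}, \eqref{Bianchi2} and the adjoint action to identify the $\eta$- and $\Phi$-terms, and characterize flatness by decomposing $R$ into its $\frak o(n)$-invariant part; your step deducing constancy from $\nabla R=df\otimes R_0$ is a slightly cleaner substitute for the paper's appeal to the Schur-type fact for conformally flat Einstein metrics, and it also covers $n=2$. One slip: in the top component of $R^{\Cal A}$ you swapped $\xi_1,\xi_2$ in the $\nabla\Phi$ terms, which should read $-(\nabla_{\xi_2}\Phi)(\xi_1)+(\nabla_{\xi_1}\Phi)(\xi_2)$, and only with these signs do they cancel against the Leibniz expansions of $\nabla_{\xi_i}(\Phi(\xi_j))$ as you claim.
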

\begin{proof}
Both expressions in \eqref{conn-def} are the sum of the component-wise Levi-Civita
connection and a tensorial term, so they both define linear connections on $\Cal
AM$. To compute the curvatures we directly evaluate the defining equation for the
curvature of a linear connection. Starting with $\nabla^{\Cal A}$, differentiating the
first formula in \eqref{conn-def} shows that we obtain for $\xi_1,\xi_2,\eta\in\frak
X(M)$ and $\Ph\in\frak o(TM)$ the equation
\begin{equation}\label{n2}
\nabla^{\Cal A}_{\xi_1}\nabla^{\Cal
  A}_{\xi_2}\begin{pmatrix}\eta\\ \Phi \end{pmatrix}=
\begin{pmatrix}
  \nabla_{\xi_1}\nabla_{\xi_2}\eta-\nabla_{\xi_1}\Phi(\xi_2)-(\nabla_{\xi_2}\Phi)(\xi_1)\\
  \nabla_{\xi_1}\nabla_{\xi_2}\Phi
\end{pmatrix}. 
\end{equation}
To obtain $R^{\Cal A}(\xi_1,\xi_2)$ we have to subtract from this the same expression
with $\xi_1$ and $\xi_2$ exchanged as well as
\begin{equation}\label{brack-term}
  \begin{pmatrix} \nabla_{[\xi_1,\xi_2]}\eta-\Ph([\xi_1,\xi_2])\\
    \nabla_{[\xi_1,\xi_2]}\Phi\end{pmatrix}.
\end{equation}    
In the $\frak o(TM)$-component, this readily produces $R(\xi_1,\xi_2)\bullet \Phi$ by
\eqref{curv-W}. In the $TM$-component, we obtain the sum of $R(\xi_1,\xi_2)(\eta)$
and
\begin{equation}\label{nb}
-\nabla_{\xi_1}\Phi(\xi_2)-(\nabla_{\xi_2}\Phi)(\xi_1)+\nabla_{\xi_2}\Phi(\xi_1)+
(\nabla_{\xi_1}\Phi)(\xi_2)-\Phi([\xi_1,\xi_2]). 
\end{equation}
Now the first and fourth term add up to $\Ph(\nabla_{\xi_1}\xi_2)$, while the second
and third term add up to $-\Ph(\nabla_{\xi_2}\xi_1)$. But torsion-freeness of the
Levi-Civita connection reads as
$\nabla_{\xi_1}\xi_2-\nabla_{\xi_2}\xi_1=[\xi_1,\xi_2]$, which completes the proof of
the formula for $R^{\Cal A}$. In particular, $R^{\Cal A}$ vanishes if and only if $R$
vanishes which happens if and only if $g$ is flat.

Proceeding to $\tn^{\Cal A}$, differentiating the second formula in \eqref{conn-def},
we conclude that to compute $\tn^{\Cal A}_{\xi_1}\tn^{\Cal
  A}_{\xi_2}\binom{\eta}{\Phi}$, we have to add
\begin{equation}\label{tn2}
\begin{pmatrix} R(\xi_2,\eta)(\xi_1) \\ -\nabla_{\xi_1}R(\xi_2,\eta) -
  R(\xi_1,\nabla_{\xi_2}\eta)+R(\xi_1,\Phi(\xi_2)) 
\end{pmatrix}
\end{equation}
  to the right hand side of \eqref{n2}. Likewise, to compute $\tn^{\Cal
    A}_{[\xi_1,\xi_2]}\binom{\eta}{\Phi}$, we have to subtract
  $R([\xi_1,\xi_2],\eta)$ from the bottom component of the right hand side of
  \eqref{brack-term}.

  Together with the description of $R^{\Cal A}$, this readily implies that the top
  component of $\widetilde{R}^{\Cal A}(\xi_1,\xi_2)\binom{\eta}{\Phi}$ is given by
  $$
  R(\xi_1,\xi_2)(\eta)+R(\xi_2,\eta)(\xi_1)-R(\xi_1,\eta)(\xi_2)
  $$
  which vanishes by the first Bianchi identity \eqref{Bianchi1}. In the bottom
  component, we have terms containing $\eta$ and terms containing $\Ph$ and we treat
  these terms separately. For the terms containing $\eta$, we can first use
  torsion-freeness of $\nabla$ to write the contribution from the bracket term as
  $$
  R([\xi_1,\xi_2],\eta)=R(\nabla_{\xi_1}\xi_2,\eta)-R(\nabla_{\xi_2}\xi_1,\eta). 
  $$
  Adding this to the contributions from \eqref{tn2}, we can use the Leibniz rule for
  $\nabla$ to write the result as
  $$
  -(\nabla_{\xi_1}R)(\xi_2,\eta)+(\nabla_{\xi_2}R)(\xi_1,\eta)=(\nabla_\eta
  R)(\xi_1,\xi_2). 
  $$
  Here we used the second Bianchi identity \eqref{Bianchi2} in the last step.

  To understand the terms involving $\Phi$, we have to interpret the term
  $R(\xi_1,\xi_2)\bullet\Ph\in\Ga(\frak o(TM))$ taking into account that the
  infinitesimal action of $\frak o(n)$ on itself is the adjoint action. Hence for
  $\zeta\in\frak X(M)$, we obtain
  $$
(R(\xi_1,\xi_2)\bullet\Ph)(\zeta)=R(\xi_1,\xi_2)(\Phi(\zeta))-\Ph(R(\xi_1,\xi_2)(\zeta)). 
  $$
  On the other hand, the additional terms from \eqref{tn2} map $\zeta$ to
  $$
  R(\xi_1,\Ph(\xi_2))(\zeta)+R(\Phi(\xi_1),\xi_2)(\zeta), 
  $$ and adding this to the above, we exactly obtain $-(\Phi\bullet
  R)(\xi_1,\xi_2,\zeta)$ and we obtain the claimed formula for $\widetilde{R}^{\Cal
    A}$. Of course, this vanishes if and only if $\nabla R=0$ and $\Phi\bullet R=0$
  for any $\Phi\in\Ga(\frak (TM))$. The second equation means that in each point
  $x\in M$, the value $R(x)\in\La^2T^*_xM\otimes\frak o(T_xM)$ lies in the kernel of
  the action of $\frak o(T_xM)$, which has a direct interpretation in terms of
  representation theory. Indeed, decomposing $R$ into the Weyl curvature $W$ and the
  Ricci tensor $\Ric$, it follows that $W(x)=0$ and $\Ric(x)=\la(x)g_x$. Hence
  $(M,g)$ has to be conformally flat and Einstein, which is well known to imply that
  $\lambda$ is constant and to be equivalent to $(M,g)$ having constant sectional
  curvature. Since constant sectional curvature also implies that $\nabla R=0$, this
  completes the proof.
\end{proof}

\subsection{Covariant exterior derivatives}\label{2.3}
It is a general fact, that a linear connection $\nabla$ on a vector bundle $E\to M$
can be coupled to the exterior derivative to obtain operators $d^{\nabla}$ defined on
differential forms with values in $E$. More explicitly, for $k=0,\dots,n=\dim(M)$ we
define $\Om^k(M,E):=\Ga(\La^kT^*M\otimes E)$, so elements of this space can be viewed
as $k$-linear alternating operators which send $k$ vector fields to a section of $E$
and are linear over smooth functions in each argument. In this language, given
$\ph\in\Om^k(M,E)$ and vector fields $\xi_0,\dots,\xi_k\in\frak X(M)$ and denoting by
hats an omission of arguments, one defines
\begin{equation}\label{dn-def}
  \begin{aligned}
    d^{\nabla}\ph(\xi_0,\dots,\xi_k)&=\textstyle\sum_{i=0}^k(-1)^i\nabla_{\xi_i}
    \ph(\xi_1,\dots,\widehat{\xi_i},\dots,\xi_k)\\ &+
  \textstyle\sum_{i<j}(-1)^{i+j}\ph([\xi_i,\xi_j],\xi_1,
  \dots,\widehat{\xi_i},\dots,\widehat{\xi_j},\dots,\xi_k).
  \end{aligned}
\end{equation}
An easy computation shows that this expression is linear over smooth functions in
each argument and hence defines $d^{\nabla}\ph\in\Om^{k+1}(M,E)$. In particular, we
can apply this to the Levi-Civita connection on any natural vector bundle to obtain
operators $d^{\nabla}$ and to the two connections from Proposition \ref{prop2.2} to
obtain operators $d^{\nabla^{\Cal A}}$ and $d^{\tn^{\Cal A}}$ that act on
$\Om^*(M,\Cal AM)$. To describe the relation between these operators, we have to
introduce some notation and simple operations.

An element of $\Om^k(M,\Cal AM)$ can be written as a vector $\binom{\ps}{\Psi}$ with
$\ps\in\Om^k(M,TM)$ and $\Psi\in\Om^k(M,\frak o(TM))$. By definition, the form $\Psi$
associates to $k$ vector fields a section of $\frak o(TM)$, which again can be viewed
as a tensorial operator mapping vector fields to vector fields. Hence $\Psi$ is a
special $\binom1{k+1}$ tensor field, which suggests and one can naturally assign to
this an element $\Alt(\Psi)\in\Om^{k+1}(M,TM)$ defined by
\begin{equation}\label{Alt-def}
  \Alt(\Psi)(\xi_0,\dots,\xi_k):=
  \textstyle\sum_{i=0}^k(-1)^{k-i}\Psi(\xi_0,\dots,\widehat{\xi_i},\dots,\xi_k)(\xi_i). 
\end{equation}
Since $\Psi$ is alternating in its $k$ arguments it easily follows that $\Alt(\Psi)$
is alternating in all $k+1$ arguments and hence defines a form of the required
type. On the other hand, given a form $\ps\in\Om^k(M,TM)$ we can use the Riemann
curvature tensor $R$ to define $i_\ps R\in\Om^{k+1}(M,\frak o(TM))$ by
\begin{equation}\label{i-def}
(i_\ps R)(\xi_0,\dots,\xi_k):=
  \textstyle\sum_{i=0}^k(-1)^{k-i}R(\psi(\xi_0,\dots,\widehat{\xi_i},\dots,\xi_k),\xi_i). 
\end{equation}
Again, the fact that $\psi$ is alternating in its $k$ arguments implies that
$i_{\psi}R$ is alternating in its $k+1$ arguments and since $R$ has values in $\frak
o(TM)$, we again obtain a form of the required type. Using this, we now formulate

\begin{prop}\label{prop2.3}
  In terms of the operations defined above, we get
  $$
  d^{\nabla^{\Cal A}}\begin{pmatrix}\psi\\\Psi
  \end{pmatrix}=\begin{pmatrix} d^{\nabla}\psi-(-1)^k\Alt(\Psi) \\ d^{\nabla}\Psi
  \end{pmatrix} \qquad  d^{\tn^{\Cal A}}\begin{pmatrix}\psi\\\Psi
  \end{pmatrix}=\begin{pmatrix} d^{\nabla}\psi-(-1)^k\Alt(\Psi) \\ d^{\nabla}\Psi + (-1)^k 
  i_{\psi}R \end{pmatrix}
  $$
\end{prop}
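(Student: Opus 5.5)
The plan is to use that $\nabla^{\Cal A}$ and $\tn^{\Cal A}$ each differ from the component-wise Levi-Civita connection $\nabla$ on $\Cal AM$ by a tensorial term. Write $\nabla^{\Cal A}=\nabla+A$ and $\tn^{\Cal A}=\nabla+\tilde A$ with $A,\tilde A\in\Om^1(M,L(\Cal AM,\Cal AM))$ given by
\[
A(\xi)\binom{\eta}{\Phi}=\binom{-\Phi(\xi)}{0},\qquad \tilde A(\xi)\binom{\eta}{\Phi}=\binom{-\Phi(\xi)}{-R(\xi,\eta)}.
\]
Inserting $\nabla_{\xi_i}+A(\xi_i)$ into the defining formula \eqref{dn-def}, the terms built from $\nabla$ (including all bracket terms) recombine into the component-wise operator $d^\nabla$. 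The remaining contribution is purely algebraic:
\[
d^{\nabla+A}\ph(\xi_0,\dots,\xi_k)=d^\nabla\ph(\xi_0,\dots,\xi_k)+\textstyle\sum_{i=0}^k(-1)^iA(\xi_i)\bigl(\ph(\xi_0,\dots,\widehat{\xi_i},\dots,\xi_k)\bigr).
\]
Since the component-wise $d^\nabla$ on $\Om^k(M,\Cal AM)$ is $\binom{d^\nabla\psi}{d^\nabla\Psi}$, what remains is to identify the extra sum in each component.

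For $\ph=\binom{\psi}{\Psi}$, the top component of the extra term, which is the same for both connections, is
\[
-\textstyle\sum_i(-1)^i\Psi(\xi_0,\dots,\widehat{\xi_i},\dots,\xi_k)(\xi_i).
\]
Comparing with \eqref{Alt-def}, we have $(-1)^i=(-1)^k(-1)^{k-i}$, so this equals $-(-1)^k\Alt(\Psi)(\xi_0,\dots,\xi_k)$. The bottom component vanishes for $A$. For $\tilde A$ it is
\[
-\textstyle\sum_i(-1)^iR\bigl(\xi_i,\psi(\xi_0,\dots,\widehat{\xi_i},\dots,\xi_k)\bigr)=\textstyle\sum_i(-1)^iR\bigl(\psi(\dots,\widehat{\xi_i},\dots),\xi_i\bigr),
\]
using skew symmetry of $R$ in its two form arguments. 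By \eqref{i-def} this is $(-1)^k i_\psi R$. Together these give both claimed formulae.

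The only real step is the first one: checking that \eqref{dn-def} is additive in this way, i.e.\ that $d^{\nabla+A}=d^\nabla+$ the algebraic term. This holds because $A$ appears only in the first sum of \eqref{dn-def}, linearly, while the bracket terms do not involve the connection at all. The rest is careful sign bookkeeping, namely converting $(-1)^i$ into $(-1)^k(-1)^{k-i}$ and making the single use of skew symmetry of $R$. The main thing to watch is the convention in \eqref{Alt-def} and \eqref{i-def} that the omitted vector field is inserted last.
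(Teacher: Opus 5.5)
Your proposal is correct and takes the same approach as the paper, whose proof just says the formulas follow by expanding \eqref{dn-def} with the connections from \eqref{conn-def}. You carry out that expansion explicitly: you split off the tensorial parts $A$ and $\tilde A$, convert $(-1)^i=(-1)^k(-1)^{k-i}$, and use the skew symmetry of $R$ once, and all of these steps check out.
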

\begin{proof}
This follows by directly expanding the definition of the covariant exterior derivative
in \eqref{dn-def} using the definitions of the connections in \eqref{conn-def}. 
\end{proof}

There is also a well known general description of the composition of two covariant
exterior derivatives in terms of the curvature of the connection. In particular, this
composition vanishes if and only if the curvature of the connection vanishes. So we
see that on a flat Riemannian manifold, we obtain $d^\nabla\o d^\nabla=0$ on forms
with values in any natural vector bundle and $d^{\nabla^{\Cal A}}\o d^{\nabla^{\Cal
    A}}=0$ on $\Om^*(M,\Cal AM)$, while constant sectional curvature is sufficient to
obtain  $d^{\tn^{\Cal A}}\o d^{\tn^{\Cal A}}=0$ on $\Om^*(M,\Cal AM)$. For future
use we need slightly more information in the case of $d^{\tn^{\Cal A}}$ in the case
of general curved manifolds.

\begin{lemma}\label{lem2.3}
For any element $\al\in\Om^k(M,\Cal AM)$ with $k=0,\dots,n$ the form $d^{\tn^{\Cal
    A}}(d^{\tn^{\Cal A}}\al)$ is a section of the subbundle $\La^kT^*M\otimes\frak
o(TM)\subset \La^kT^*M\otimes\Cal AM$, i.e.\ its top component always vanishes.
\end{lemma}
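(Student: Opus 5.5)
The plan is to use the general formula for the composition of two covariant exterior derivatives: for any linear connection $\nabla$ on a vector bundle $E$ and any $\al\in\Om^k(M,E)$, one has $d^\nabla(d^\nabla\al)=R^\nabla\wedge\al$. Here the right-hand side is the $(k+2)$-form obtained by alternating $(\xi_0,\dots,\xi_{k+1})\mapsto R^\nabla(\xi_i,\xi_j)(\al(\dots))$ over all splittings of the arguments into two and $k$ entries, with the usual signs. This is the "well known general description" mentioned just before the lemma. I would recall it, or verify it by a standard induction or a computation in a local frame, which is routine.

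I would then apply it with $\nabla=\tn^{\Cal A}$, using the curvature formula from Proposition \ref{prop2.2}. That formula shows that for every pair $\xi_1,\xi_2$ and every section $\binom{\eta}{\Ph}$, the value $\widetilde R^{\Cal A}(\xi_1,\xi_2)\binom{\eta}{\Ph}$ has vanishing top component. In other words, $\widetilde R^{\Cal A}(\xi_1,\xi_2)$ has values in the subbundle $\frak o(TM)\subset\Cal AM$. Since $\widetilde R^{\Cal A}\wedge\al$ is a sum, with signs, of such values, it is a form with values in $\frak o(TM)$. Hence the top component of $d^{\tn^{\Cal A}}(d^{\tn^{\Cal A}}\al)$ vanishes, and the form lies in the stated subbundle. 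The lemma writes $\La^kT^*M$, but the form actually has degree $k+2$; the claim concerns only the values in $\Cal AM$, so this does not affect the argument.

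As an independent check, which avoids relying on the general wedge formula, I would compute the top component directly from Proposition \ref{prop2.3}. Applying it twice gives the top component
$$d^\nabla d^\nabla\psi-(-1)^{k+1}\Alt\big(d^\nabla\Psi+(-1)^ki_\psi R\big)-(-1)^kd^\nabla\Alt(\Psi).$$
One then needs three facts. First, $d^\nabla d^\nabla\psi=R\wedge\psi$, which is precisely $\Alt(i_\psi R)$ up to sign. Second, $\Alt$ commutes with $d^\nabla$ up to the sign $(-1)$, which follows from torsion-freeness, i.e.\ from the same cancellation as in \eqref{nb}. Third, the terms involving $\psi$ cancel by the first Bianchi identity. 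The main obstacle in this route is the sign bookkeeping in the identity relating $d^\nabla\Alt(\Psi)$ and $\Alt(d^\nabla\Psi)$. For that reason I would present the curvature-based argument as the proof and keep the direct computation only as a consistency check.
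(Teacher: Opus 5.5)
Your main argument is correct and is essentially the paper's own proof: the general formula writes $d^{\tn^{\Cal A}}d^{\tn^{\Cal A}}\al$ as a signed sum of terms in which $\widetilde R^{\Cal A}(\xi_i,\xi_j)$ acts on values of $\al$, and Proposition \ref{prop2.2} shows that each such term has vanishing $TM$-component. You are also right that the form degree in the statement should be $k+2$, not $k$.

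One correction to your optional direct check: with the paper's sign conventions, $\Alt$ commutes with $d^\nabla$ exactly, not up to a sign. For $k=0$, both $d^\nabla\Alt(\Psi)$ and $\Alt(d^\nabla\Psi)$ equal $(\nabla_{\xi_0}\Psi)(\xi_1)-(\nabla_{\xi_1}\Psi)(\xi_0)$, and this exact commutation is what makes the $\Psi$-terms cancel; the expression $d^\nabla d^\nabla\psi+\Alt(i_\psi R)$ vanishes only after applying the first Bianchi identity.
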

\begin{proof}
The general results on the composition of two covariant exterior derivatives
mentioned above express $d^{\tn^{\Cal A}}(d^{\tn^{\Cal
    A}}\al)(\xi_1,\dots,\xi_{k+2})$ as a linear combination of terms in which
$\widetilde{R}^{\Cal A}(\xi_i,\xi_j)$ acts on the value of $\al$ on the other vector
fields. Hence the claim readily follows from the description of $\widetilde{R}^{\Cal
  A}$ in Proposition \ref{prop2.2}.  
\end{proof}

\subsection{Harmonic subbundles, splitting operators and BGG operators}\label{2.4}
For each $k=0,\dots,n$, we next define a subbundle $\Cal
H^kM\subset\La^kT^*M\otimes\Cal AM$ as follows. For $k=0$, we put $\Cal
H^0M:=TM\subset\Cal AM$ and for $k=1$, we put
$$
\Cal H^1M:=\Cal S(TM)\subset T^*M\otimes TM\subset T^*M\otimes\Cal AM,
$$
the bundle of endomorphisms of $TM$, which are symmetric with respect to the metric
$g$. For $k\geq 2$, we observe that the operation $\Alt$ defined in \eqref{Alt-def}
actually comes from a vector bundle map $\La^kT^*M\otimes\frak
o(M)\to\La^{k+1}T^*M\otimes TM$, and we define $\Cal H^kM$ to be the kernel of this
bundle map. Hence a section of the bundle $\Cal H^kM$ for $k\geq 2$ simply is a form
$\Psi\in\Om^k(M,\frak o(TM))$ such that $\Alt(\Psi)=0$.

Next, we want to construct, for each $k$, a differential operator $L=L^k:\Ga(\Cal
H^kM)\to\Om^k(M,\Cal AM)$, which admits a simple characterization in each case.
\begin{thm}\label{thm2.4}
(i) Given $\eta\in\frak X(M)$, there is a unique section $\Phi\in\Ga(\frak o(TM))$
  such that $d^{\tn^{\Cal A}}\binom{\eta}{\Phi}$ is a section of $\Cal H^1M\oplus
  (T^*M\otimes\frak o(TM))\subset T^*M\otimes\Cal AM$. Putting
  $L(\eta)=L^0(\eta):=\binom{\eta}{\Phi}$ defines a linear first order differential
  operator
  $$L:\Ga(\Cal H^0M)\to\Ga(\Cal AM)=\Om^0(M,\Cal AM).$$

(ii) Given $\psi\in\Ga(\Cal H^1)\subset\Om^1(M,TM)$, there is a unique
  $\Psi\in\Om^1(M,\frak o(TM))$ such that $d^{\tn^{\Cal A}}\binom{\ps}{\Psi}$ is a
  section of the subbundle $\La^2T^*M\otimes\frak o(TM)\subset\La^2T^*M\otimes\Cal
  AM$. Putting $L(\psi)=L^1(\psi):=\binom{\psi}{\Psi}$ defines a linear first order
  differential operator $$L:\Ga(\Cal H^1M)\to\Om^1(M,\Cal AM).$$

  (iii) For $k\geq 2$ and $\Psi\in\Ga(\Cal H^kM)$, $L(\Psi):=\binom0{\Psi}$ defines a
  tensorial operator $L:\Ga(\Cal H^kM)\to\Om^k(M,\Cal AM)$ and
  $d^{\tn^{\Cal A}}L(\Psi)\in\Ga(\La^{k+1}T^*M\otimes\frak
  o(TM))\subset\Om^{k+1}(M,\Cal AM)$.
\end{thm}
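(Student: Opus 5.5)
Everything follows from Proposition \ref{prop2.3}, which computes the top component of $d^{\tn^{\Cal A}}\binom{\psi}{\Psi}$ for $\psi\in\Om^k(M,TM)$ as $d^\nabla\psi-(-1)^k\Alt(\Psi)$. In each case the required condition on $d^{\tn^{\Cal A}}\binom{\psi}{\Psi}$ constrains only this top component. Hence each statement reduces to a pointwise linear-algebra fact about the bundle map $\Alt:\La^kT^*M\otimes\frak o(TM)\to\La^{k+1}T^*M\otimes TM$ for $k=0,1$.

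For (i), with $k=0$ the top component is $\nabla\eta-\Alt(\Phi)$, where $\Alt(\Phi)(\xi)=\Phi(\xi)$. So we must choose $\Phi$ such that $\nabla\eta-\Phi$, viewed as a section of $T^*M\otimes TM$, is $g$-symmetric. Decompose $T^*M\otimes TM=\Cal S(TM)\oplus\frak o(TM)$ orthogonally with respect to $g$. The condition holds exactly when $\Phi$ is the skew part of $\nabla\eta$, i.e.
$$g(\zeta,\Phi(\xi))=\tfrac12\bigl(g(\zeta,\nabla_\xi\eta)-g(\xi,\nabla_\zeta\eta)\bigr).$$
This gives existence and uniqueness. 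It also shows that $L^0$ is a linear first order differential operator: $\Phi$ is a tensorial projection of $\nabla\eta$.

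For (ii), with $k=1$ we need $\Psi\in\Om^1(M,\frak o(TM))$ with $\Alt(\Psi)=-d^\nabla\psi$. Here $\Alt(\Psi)(\xi_0,\xi_1)=\Psi(\xi1)(\xi_0)-\Psi(\xi_0)(\xi_1)$, and $d^\nabla\psi$ is an arbitrary section of $\La^2T^*M\otimes TM$. The key step is therefore that $\Alt:T^*M\otimes\frak o(TM)\to\La^2T^*M\otimes TM$ is a linear isomorphism in each fiber.

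I expect this to be the main point of the proof, though it is classical: it is the algebraic fact behind the uniqueness of the Levi-Civita connection. Both bundles have rank $n\cdot n(n-1)/2$, so it suffices to show injectivity. Suppose $\Alt(\Psi)=0$ and set $T(\xi,\eta,\zeta):=g(\Psi(\xi)(\eta),\zeta)$. Then $T$ is symmetric in the first two slots (up to the sign convention) and skew in the last two. The standard three-step swap then gives $T=-T$, so $T=0$.

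The inverse is given by the usual Koszul-type formula. It is a parallel bundle map, so $\Psi=-\Alt^{-1}(d^\nabla\psi)$ is a first order operator in $\psi$. Uniqueness also follows from injectivity. Note that for (ii) the symmetry of $\psi$ is not used; it only matters as the domain specification.

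For (iii), $L$ is obviously tensorial. By Proposition \ref{prop2.3} the top component of $d^{\tn^{\Cal A}}\binom0\Psi$ is $-(-1)^k\Alt(\Psi)$, which vanishes by the definition of $\Cal H^kM$. Hence $d^{\tn^{\Cal A}}L(\Psi)$ lies in $\Ga(\La^{k+1}T^*M\otimes\frak o(TM))$.
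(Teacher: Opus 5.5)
Your proposal is correct and follows the paper's proof essentially step by step. You reduce everything via Proposition \ref{prop2.3} to the top component, handle (i) by the symmetric/skew splitting of $\nabla\eta$, and handle (ii) by the fact that $\Alt:T^*M\otimes\frak o(TM)\to\La^2T^*M\otimes TM$ is an isomorphism. The paper simply cites that isomorphism from Arnold--Hu, whereas you prove it by the rank count plus the standard symmetric/skew swap argument. One harmless slip: by \eqref{Alt-def} one has $\Alt(\Psi)(\xi_0,\xi_1)=\Psi(\xi_0)(\xi_1)-\Psi(\xi_1)(\xi_0)$, which is the opposite of the sign you wrote. Since only the bijectivity of $\Alt$ is used, this affects nothing.
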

\begin{proof}
(i) Splitting endomorphisms of $TM$ into a symmetric and a skew symmetric part with
  respect to $g$ shows that $T^*M\otimes TM=\frak o(TM)\oplus\Cal S(TM)$. Since this
  is a point-wise property, any element $\psi\in\Om^1(M,TM)$ uniquely splits into the
  sum of a section of $\frak o(TM)$ and a section of $\Cal S(TM)$. From the
  definition in \eqref{conn-def}, it is clear that the top component of $d^{\tn^{\Cal
      A}}\binom{\eta}{\Phi}$ is a section of $\Cal H^1M$ if and only if $\Phi$ is the
  $\frak o(TM)$-component of $\nabla\eta$, and this clearly implies all claims in
  (i).

(ii) Here the key fact is that the bundle map inducing $\Alt$ defines an isomorphism
  $T^*M\otimes\frak o(TM)\to\La^2T^*M\otimes TM$, see e.g.\ Lemma 2 of
  \cite{Arnold-Hu}. (This is induced by the Spencer differential for Riemannian
  structures and the statement that it is an isomorphisms implies existence and
  uniqueness of the Levi-Civita connection.) Hence for
  $\binom{\psi}{\Psi}\in\Om^1(M,\Cal AM)$, the condition that $d^{\tn^{\Cal
      A}}\binom{\psi}{\Psi}$ has vanishing top component is by Proposition
  \ref{prop2.3} equivalent to $\Psi=-\Alt^{-1}(d^\nabla\psi)$. This implies all
  claims in (ii).

  (iii) By Proposition \ref{prop2.3}, $d^{\tn^{\Cal A}}\binom{0}{\Psi}$ has top
  component $(-1)^{k+1}\Alt(\Psi)$ so for $k\geq 2$ this vanishes for
  $\Psi\in\Ga(\Cal H^kM)\subset\Om^k(M,\Cal AM)$.
\end{proof}

Having the operators $L$ at hand, we easily obtain a sequence of differential
operators 
$$D=D^k:\Ga(\Cal H^kM)\to\Ga(\Cal H^{k+1}M).$$ For $k=0$, we know from Proposition
\ref{prop2.4} that for $\eta\in\frak X(M)$, the top component of $\tn^{\Cal
  A}L(\eta)$ is a section of the subbundle $\Cal H^1M\subset T^*M\otimes TM$, so
mapping to this component defines a differential operator $D^0$ as required. For the
remaining cases, we need an additional result.

\begin{prop}\label{prop2.4}
  For $k\geq 1$ and a section $\al\in\Ga(\Cal H^kM)$, $d^{\tn^{\Cal A}}L(\al)$ is a
  section of the subbundle $\Cal H^{k+1}M\subset\La^{k+1}T^*M\otimes\Cal AM$. Thus
  $d^{\tn^{\Cal A}}\o L$ provides an operator $D^k$ as required.
\end{prop}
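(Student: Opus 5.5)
By Theorem \ref{thm2.4}, parts (ii) and (iii), for every $k\geq 1$ the form $\beta:=d^{\tn^{\Cal A}}L(\al)\in\Om^{k+1}(M,\Cal AM)$ has vanishing $TM$-component. So $\beta=\binom{0}{B}$ with $B\in\Om^{k+1}(M,\frak o(TM))$. Since $k+1\geq 2$, the subbundle $\Cal H^{k+1}M$ consists of exactly such forms with $\Alt(B)=0$. It therefore remains to show $\Alt(B)=0$.

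To get this, apply $d^{\tn^{\Cal A}}$ once more. By Lemma \ref{lem2.3}, $d^{\tn^{\Cal A}}\beta=d^{\tn^{\Cal A}}(d^{\tn^{\Cal A}}L(\al))$ has vanishing top component. On the other hand, Proposition \ref{prop2.3} applied to $\binom{0}{B}$ in degree $k+1$ gives top component
$$d^\nabla 0-(-1)^{k+1}\Alt(B)=(-1)^{k}\Alt(B).$$
Comparing the two descriptions forces $\Alt(B)=0$, so $\beta\in\Ga(\Cal H^{k+1}M)$.

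Hence $d^{\tn^{\Cal A}}\o L$ maps $\Ga(\Cal H^kM)$ to $\Ga(\Cal H^{k+1}M)$, and we may define $D^k:=d^{\tn^{\Cal A}}\o L$. This is a differential operator because it is a composition of differential operators, and it is natural because all ingredients are canonically associated to $(M,g)$.

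The only delicate point is Lemma \ref{lem2.3}, which holds without any curvature assumption on $g$. It rests on the fact that the top component of $\widetilde R^{\Cal A}$ vanishes identically. Once that lemma is granted, the argument above is just bookkeeping of components and signs.
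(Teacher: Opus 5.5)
Your proof is correct and follows the paper's argument: Theorem \ref{thm2.4}(ii),(iii) shows the top component vanishes, and Lemma \ref{lem2.3} combined with Proposition \ref{prop2.3} in degree $k+1$ then forces $\Alt(B)=0$. Your sign, top component $=(-1)^k\Alt(B)$, is the correct one (the paper states $(-1)^{k+1}$), but this does not matter since the quantity vanishes.
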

\begin{proof}
  From parts (ii) and (iii) of Theorem \ref{thm2.4}, we know that for $k\geq 1$,
  $d^{\tn^{\Cal A}}L(\al)$ is a section of the subbundle $\La^{k+1}T^*M\otimes\frak
  o(TM)$. Hence it is of the form $\binom{0}{\Psi}$ and to prove our result, we have
  to show that $\Alt(\Psi)=0$. But by Proposition \ref{prop2.3}, $\Alt(\Psi)$ can be
  computed as $(-1)^{k+1}$ times the top component of
  $$
   d^{\tn^{\Cal A}}\begin{pmatrix} 0\\ \Psi\end{pmatrix}=  d^{\tn^{\Cal
       A}}d^{\tn^{\Cal A}}L(\al), 
   $$
   which vanishes by Lemma \ref{lem2.3}.
 \end{proof}

 \begin{definition}\label{def2.4}
   The operators $L:\Ga(\Cal H^kM)\to\Om^k(M,\Cal AM)$ are called the
   \textit{splitting operators} and the operators
   $D:\Ga(\Cal H^kM)\to\Ga(\Cal H^{k+1}M)$ are called the \textit{BGG operators}
   associated to the deformation connection $\tn^{\Cal A}$. The sequence
   $$
  \Ga(\Cal H^0)\overset{D^0}{\longrightarrow}\Ga(\Cal
  H^1M)\overset{D^1}{\longrightarrow}\cdots
  \overset{D^{n-1}}{\longrightarrow}\Ga(\Cal H^nM)
  $$
  is called the \textit{BGG sequence} associated to the deformation connection
  $\tn^{\Cal A}$. 
 \end{definition}

 \subsection{Relation to the twisted de Rham sequence}\label{2.5}
 We next discuss the relation of the BGG sequence to the twisted de Rham sequence
 $(\Om^*(M,\Cal AM),d^{\tn^{\Cal A}})$. The main tool to do this are the splitting
 operators $L$ constructed in Theorem \ref{thm2.4}.

 \begin{thm}\label{thm2.5}
   For any $k=0,\dots, n-1$, we get a commutative diagram
   $$\begin{CD}
     \Om^k(M,\Cal AM) @>d^{\tn^{\Cal A}}>> \Om^{k+1}(M,\Cal AM)\\
     @AL^kAA   @AL^{k+1}AA \\
     \Ga(\Cal H^kM) @>D^k>>  \Ga(\Cal H^{k+1}M)
   \end{CD}
   $$ If the metric $g$ has constant sectional curvature, then $d^{\tn^{\Cal
       A}}\o d^{\tn^{\Cal A}}=0$ and $D^{k+1}\o D^k=0$, so both sequences are
   complexes, and the operators $L$ define a chain map that induces an isomorphism in
   cohomology.
 \end{thm}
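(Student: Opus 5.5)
Commutativity of the diagram reduces to showing $d^{\tn^{\Cal A}}L^k(\al)=L^{k+1}(D^k\al)$ for $\al\in\Ga(\Cal H^kM)$, and the key is that each $L^{k+1}$ is characterized by the uniqueness statements in Theorem \ref{thm2.4}. Consider first $k\geq 1$. By Proposition \ref{prop2.4}, $\be:=d^{\tn^{\Cal A}}L(\al)$ has the form $\binom0{\Psi}$ with $\Psi\in\Ga(\Cal H^{k+1}M)$, and $D^k\al$ is this section. For $k+1\geq 2$, Theorem \ref{thm2.4}(iii) gives $L^{k+1}(\Psi)=\binom0\Psi$, which equals $\be$, so the square commutes trivially.

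For $k=0$, put $\be:=d^{\tn^{\Cal A}}L(\eta)=\binom{\psi}{\Psi}$. By construction, $\psi=D^0\eta\in\Ga(\Cal H^1M)$, and I need $\be=L^1(\psi)$. By the uniqueness in Theorem \ref{thm2.4}(ii), it suffices to show that $d^{\tn^{\Cal A}}\be$ has vanishing top component. This holds because $d^{\tn^{\Cal A}}\be=d^{\tn^{\Cal A}}d^{\tn^{\Cal A}}L(\eta)$, whose top component vanishes by Lemma \ref{lem2.3}. I expect no real obstacle here; the only point to watch is this uniqueness argument in degree one, which needs Lemma \ref{lem2.3} for general (curved) $g$.

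Now assume $g$ has constant sectional curvature. Then $\widetilde R^{\Cal A}=0$ by Proposition \ref{prop2.2}, so $d^{\tn^{\Cal A}}\o d^{\tn^{\Cal A}}=0$. Each $L$ is injective, since $\al$ is recovered as a component of $L(\al)$: the top component for $k=0,1$ and the bottom component for $k\geq2$. Commutativity then gives $L(D^{k+1}D^k\al)=d^{\tn^{\Cal A}}d^{\tn^{\Cal A}}L\al=0$, hence $D\o D=0$, and $L$ is a chain map.

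The main work is showing that $L$ is a quasi-isomorphism, and I would do this by a direct chase. For injectivity in cohomology, suppose $L\al=d^{\tn^{\Cal A}}\ga$ with $\al\in\Ga(\Cal H^{k+1}M)$. I would modify $\ga$ by exact forms into the image of $L$. For $k+1\geq2$, write $\ga=\binom{\ps}{\Ph}$. The top component of $d^{\tn^{\Cal A}}\ga$ vanishes, and I need to show $\ga$ is cohomologous to $\binom{0}{\Ph'}$ with $\Alt(\Ph')=0$. The step is to kill the $TM$-part $\ps$. For $k\geq2$, every $\ps\in\Om^k(M,TM)$ equals $(-1)^{k}\Alt(\Xi)$ modulo lower order for some $\Xi\in\Om^{k-1}(M,\frak o(TM))$. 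This uses surjectivity of $\Alt:\La^{k-1}T^*M\otimes\frak o(TM)\to\La^kT^*M\otimes TM$ for $k\geq2$, which I would check algebraically; the Koszul-type complex formed by the $\Alt$ maps is exact away from the $\Cal H$ pieces. Subtracting $d^{\tn^{\Cal A}}\binom0\Xi$ then reduces $\ga$ to pure $\frak o(TM)$-form with $\Alt(\Ph')=0$, i.e.\ $\ga\in L(\Ga(\Cal H^kM))$, and injectivity of $L$ gives $\al=D\ga'$. The cases $k=0,1$ are handled similarly using $T^*M\otimes TM=\frak o\oplus\Cal S$ and the isomorphism $T^*M\otimes\frak o(TM)\cong\La^2T^*M\otimes TM$. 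Surjectivity follows the same pattern: given a closed $\ga$, subtract an exact form to put it into the form $L\al$; then $D\al$ is the bottom part of $d^{\tn^{\Cal A}}L\al=0$, so $\al$ is closed. The delicate part, which I expect to be the main obstacle, is the algebraic exactness of the $\Alt$-sequence in each degree: that $\Alt$ is surjective onto $\La^kT^*\otimes TM$ for $k\geq 2$, and that its kernel is exactly $\Cal H$. I would verify this by a dimension count, using that $\Alt$ is the Spencer differential of $\frak o(n)$.
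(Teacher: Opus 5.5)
Your proposal is correct and follows the paper's proof essentially step by step. Commutativity comes from Proposition \ref{prop2.4} for $k\geq1$, and from Lemma \ref{lem2.3} with the uniqueness in Theorem \ref{thm2.4}(ii) for $k=0$. Injectivity of $L$ then gives $D\o D=0$. The cohomology isomorphism is obtained, as in the paper, by normalizing representatives degree by degree: uniqueness in Theorem \ref{thm2.4}(i) in degree $0$, removing the skew part of $\psi$ with $\tn^{\Cal A}\binom0\Phi$ in degree $1$, and killing the top component via surjectivity of $\Alt$ in degrees $\geq2$.

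The one algebraic input you flag, surjectivity of $\Alt:\La^{k-1}T^*M\otimes\frak o(TM)\to\La^kT^*M\otimes TM$ for $k\geq2$, is not proved in the paper; it is cited from Lemma 2 of \cite{Arnold-Hu}. A dimension count comparing source and target cannot establish it on its own. It does follow at once from the degree-two isomorphism $T^*M\otimes\frak o(TM)\cong\La^2T^*M\otimes TM$. Indeed, $\Alt(\beta\wedge\chi)=\beta\wedge\Alt(\chi)$ for scalar forms $\beta$, and $\La^kT^*M\otimes TM$ is spanned by wedge products of $\La^{k-2}T^*M$ with $\La^2T^*M\otimes TM$.
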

 \begin{proof}
For $k\geq 1$, commutativity of the diagram follows straight from the construction:
For $\al\in\Ga(\Cal H^kM)$, we have shown in Proposition \ref{prop2.4} that
$d^{\tn^{\Cal A}}L(\al)$ always is a section of the subbundle $\Cal H^{k+1}\subset
\La^{k+1}T^*M\otimes\Cal AM$. We have defined this to be $D^k(\al)$, while by
definition $L^{k+1}$ is just the inclusion of the sections of the subbundle $\Cal
H^{k+1}$ into $\Om^{k+1}(M,\Cal A^*M)$, so the claim follows.

For $k=0$ and $\eta\in\frak X(M)$, we have defined $D(\eta)$ as the top component of
$\tn^{\Cal A}L(\eta)=d^{\tn^{\Cal A}}L(\eta)$, which always lies in $\Ga(\Cal
H^1M)$. Hence $\tn^{\Cal A}L(\eta)\in\Om^1(M,\Cal A)$ has the form
$\binom{\eta}{\Phi}$ for some $\Phi\in\Ga(\frak o(TM))$, and by Lemma \ref{lem2.3} we
know that $d^{\tn^{\Cal A}}(\tn^{\Cal A}L(\eta))$ has vanishing top component. The
uniqueness in part (ii) of Theorem \ref{thm2.4} thus implies that $\tn^{\Cal
  A}L(\eta)=L(D(\eta))$.

If $g$ has constant sectional curvature, we have shown in Proposition \ref{prop2.2}
that the connection $\tn^{\Cal A}$ is flat and then it is well-known that
$(\Om^*(M,\Cal AM),d^{\tn^{\Cal A}})$ is a complex. Commutativity of the diagrams
then implies that
$$
L^{k+1}\o D^{k+1}\o D^k=d^{\tn^{\Cal A}}\o L^k\o D^k=d^{\tn^{\Cal A}}\o d^{\tn^{\Cal
    A}}\o L^k=0
$$
and by construction the operator $L^{k+1}$ is injective. Hence $(\Ga(\Cal H^*M),D)$ is a
complex and then commutativity of the diagrams exactly says that $L$ is a chain map
and hence induces a map in cohomology.

To show that this induced map is an isomorphism, we start in degree $0$. For a
section $\binom{\eta}{\Phi}$ of $\Cal AM$ such that $\tn^{\Cal
  A}\binom{\eta}{\Phi}=0$, uniqueness in part (i) of Theorem \ref{thm2.5} implies
that $\binom{\eta}{\Phi}=L(\eta)$. But $\tn^{\Cal A}\o L=L\o D$ shows that $\tn^{\Cal
  A}L(\eta)=0$ is equivalent to $D(\eta)=0$ which proves bijectivity of the induced
map in cohomology in degree $0$.

For $\binom{\psi}{\Psi}\in\Om^1(M,\Cal AM)$ we can split $\psi\in\Om^1(M,TM)$ into a
symmetric part $\Cal S(\psi)$ and an alternating part $\Phi$. But then Proposition
\ref{prop2.3} shows that $\binom{\psi}{\Psi}+\tn^{\Cal A}\binom{0}{\Phi}=\binom{\Cal
  S(\psi)}{\tilde\Psi}$ for some $\tilde\Psi\in\Om^1(M,\frak o(TM))$. If
$\binom{\psi}{\Psi}$ is a cocycle, then also $d^{\tn^{\Cal A}}\binom{\Cal
  S(\psi)}{\tilde\Psi}=0$. As in degree zero, we conclude that $\binom{\Cal
  S(\psi)}{\tilde\Psi}=L(S(\psi))$ and that $d^{\tn^{\Cal A}}L(\tau)=0$ is equivalent
to $D(\tau)=0$ for any $\tau\in\Ga(\Cal H^1M)$. Hence bijectivity of the induced map
in cohomology in degree one follows. 

In degrees $\geq 2$, things are even easier, since Lemma 2 of \cite{Arnold-Hu} shows
that any section of $\La^kT^*M\otimes TM$ can be written as $\Alt(\Psi)$ for some
$\Psi\in\Ga(\La^{k-1}T^*M\otimes\frak o(TM))$. Using Proposition \ref{prop2.3}, we
conclude that for any $\al\in\Om^k(M,\Cal AM)$, there is a $\be\in\Om^{k-1}(M,\Cal
AM)$ such that $\al+d^{\tn^{\Cal A}}\be$ has vanishing top component. But the
form $\al=\binom{0}{\Psi}\in\Om^k(M,\Cal AM)$ is closed for $d^{\tn^{\Cal A}}$ if and
only if $\Alt(\Psi)=0$, so $\Psi\in\Ga(\Cal H^kM)$ and $\al=L(\Psi)$ and
$d^{\tn^{\Cal A}}\al=D(\Psi)$.
 \end{proof}

 \begin{remark}\label{rem2.4}
   One can run an analogous construction based on the adjoint tractor connection
   $\nabla^{\Cal A}$ rather than on the deformation connection $\tn^{\Cal A}$, but
   things become a bit more complicated in this case. In the setting of Theorem
   \ref{thm2.4}, nothing changes if one replaces $\tn^{\Cal A}$ by $\nabla^{\Cal A}$,
   so one obtains the same splitting operators $L$ from $\nabla^{\Cal A}$. Also, the
   construction of $D^0$ does not depend on whether one uses $d^{\nabla^{\Cal A}}$ or
   $d^{\tn^{\Cal A}}$, so the first BGG operator remains unchanged, too. Both these
   facts are parallel to what happens for parabolic geometries, compare to Section
   3.5 of \cite{deformations}. In degrees $k\geq 2$, one can follow the same line of
   argument as in the proof of Proposition \ref{prop2.4} to show that for
   $\al\in\Ga(\Cal H^k)$, $d^{\nabla^{\Cal A}}L(\al)$ is a section of the subbundle
   $\Cal H^{k+1}$, since for $k\geq 2$, $L(\al)$ itself has vanishing top component.

   For $k=1$, things really get more complicated. For $\psi\in\Ga(\Cal H^1M)$, it is
   of course still true that $d^{\nabla^{\Cal A}}L(\psi)$ is a section of the
   subbundle $\La^2T^*M\otimes\frak o(TM)$. One can compute $\Alt(d^{\nabla^{\Cal
       A}}L(\psi))$ using the same method as in the proof of Proposition
   \ref{prop2.4}, but here this produces a curvature term that is non-zero in
   general. One can still define an associated BGG operator by taking the component
   of $d^{\nabla^{\Cal A}}L(\psi)$ in $\Ga(\Cal H^2M)$ and this component can be
   computed explicitly using the curvature term derived before.
 \end{remark}

 \subsection{Explicit formulae}\label{2.6}
 It is easy to compute explicit formulae for both the splitting operators and the BGG
 operators. For the computation in degree zero, the main observation is that the
 splitting of $\psi\in\Om^1(M,TM)$ into symmetric and skew symmetric parts can be
 written in terms of a local orthonormal frame $\{s_i\}$ for $(M,g)$ as
 $$
 \ze\mapsto \tfrac{1}{2}\left(\ps(\zeta)\pm \textstyle\sum_ig(\ps(s_i),\ze)s_i\right).
 $$
 By the proof of Theorem \ref{thm2.4}, this immediately implies that writing
 $L(\eta)=\binom{\eta}{\Phi}$ we get for any $\ze\in\frak X(M)$:
 \begin{gather}\label{L0}
   \Phi(\zeta)=\tfrac12\left(\nabla_\ze \eta-
     \textstyle\sum_ig(\nabla_{s_i}\eta,\ze)s_i\right)\\
   \label{D0}
   D^0(\eta)(\zeta)=\tfrac12\left(\nabla_\ze \eta+
     \textstyle\sum_ig(\nabla_{s_i}\eta,\ze)s_i\right).
   \end{gather} 
   In (abstract) index notation, this reads as
   $$
L(\eta)=\begin{pmatrix}  \eta^a \\ \frac12(\nabla_b\eta^c-g^{ci}g_{bj}\nabla_i\eta^j )
\end{pmatrix} \qquad
(D^0(\eta))_a^b=\tfrac12(\nabla_a\eta^b+g^{bi}g_{aj}\nabla_i\eta^j), 
$$
so we obtain the Killing operator as the first BGG operator. In particular Theorem
\ref{thm2.4} implies that for a metric of constant sectional curvature, the BGG
sequence is a fine resolution of the sheaf of local Killing vector fields. Thus we
have obtained alternative proofs the results of \cite{Calabi} and \cite{BBL}. 

\medskip

In degree one, we have to start from $\ps\in\Ga(\Cal S(TM))$, i.e.~a
$\binom11$-tensor field such that the map $(\xi_1,\xi_2)\mapsto g(\psi(\xi_1),\xi_2)$
is symmetric in $\xi_1,\xi_2$. Observe that this implies that also $\nabla_\xi\ps$ is
a section of $\Cal S(TM)$ for any $\xi\in\frak X(M)$. This implies that defining a
$\binom03$-tensor field $A^\psi$ by 
\begin{equation}\label{Apsi-def}
A^\psi (\xi_1,\xi_2,\xi_3):=g((\nabla_{\xi_1}\ps)(\xi_2),\xi_3)
\end{equation}
for $\xi_i\in\frak X(M)$, the result is symmetric in $\xi_2$ and $\xi_3$. Now we
define $\Ps\in\Om^1(M,L(TM,TM))$ by requiring that
\begin{equation}\label{Psi-def}
g(\Ps(\xi_1)(\xi_2),\xi_3)=A^\psi(\xi_2,\xi_1,\xi_3)-A^\psi(\xi_3,\xi_1,\xi_2).
\end{equation}
This evidently changes sign if we exchange $\xi_2$ and $\xi_3$, which means that
$\Psi(\xi_1)\in\frak o(TM)$ for each $\xi_1$, so $\Psi\in\Om^1(M,\frak o(TM))$. On
the other hand, using \eqref{Psi-def} to compute
$g(\Ps(\xi_1)(\xi_2)-\Ps(\xi_2)(\xi_1),\xi_3)$ then using symmetry of $A^\psi$ in the last
two entries, we obtain $A^\psi(\xi_2,\xi_1,\xi_3)-A^\psi(\xi_1,\xi_2,\xi_3)$. This
shows that 
$$
\Ps(\xi_1)(\xi_2)-\Ps(\xi_2)(\xi_1)=(\nabla_{\xi_2}\psi)(\xi_1)-(\nabla_{\xi_1}\psi)(\xi_2),
$$
which coincides with $-d^\nabla\ps(\xi_1,\xi_2)$ by torsion-freeness of
$\nabla$. Hence $\Alt(\Psi)=-d^{\nabla}\psi$, which shows that
$L(\psi)=\binom{\psi}{\Psi}$, or in abstract index notation
$$
L(\psi)=\begin{pmatrix} \psi_a^b \\ \nabla_d\psi_c^e-g_{di}g^{je}\nabla_j\psi_c^i\end{pmatrix}. 
$$

Here the convention for the lower row is that $c$ denotes the $1$-form index, while
$d$ and $e$ are the $\frak o(TM)$-indices.  From this, we can easily compute the
value $D(\psi)\in\Om^2(M,\frak o(TM))$ of BGG operator. By Proposition \ref{prop2.3},
this is given by $d^{\nabla}\Psi-i_{\ps}R$ and we obtain
$$
D(\psi)(\xi_1,\xi_2)=\nabla_{\xi_1}\Psi(\xi_2)-\nabla_{\xi_2}\Psi(\xi_1)-\Psi([\xi_1,\xi_2])-R(\psi(\xi_1),\xi_2)+R(\psi(\xi_2),\xi_1). 
$$
By torsion-freeness of $\nabla$, the first three terms in the right hand side can be
equivalently rewritten as
$(\nabla_{\xi_1}\Psi)(\xi_2)-(\nabla_{\xi_2}\Psi)(\xi_1)$. In the latter version, we
can easily rewrite explicitly in terms of $\psi$ only. Letting $a$ and $b$ denote the
$2$-form indices we obtain the following expression for $D(\psi)_{ab}{}^c_d$:
$$
\nabla_a\nabla_d\psi_b^c-\nabla_b\nabla_d\psi_a^c
-g_{di}g^{cj}(\nabla_a\nabla_j\psi^i_b-\nabla_b\nabla_j\psi^i_a)
-\psi^i_aR_{ib}{}^c{}_d+\psi^i_bR_{ia}{}^c{}_d.
$$

After an appropriate interpretation of the double covariant derivatives used in
\cite{BBL} this is exactly twice the formula obtained there, which is attributed to
M.\ Berger's work \cite{Berger}. The factor $2$ will be explained in Proposition
\ref{prop3.5}, see also Theorem \ref{thm3.6}. 

\medskip

In higher degrees, it is much easier to obtain explicit formulae. For
$k\geq 2$, a section of $\Cal H^k$ is $\Psi\in\Om^k(M,\frak o(TM))$
such that $\Alt(\Psi)=0$. Moreover $L(\Psi)=\binom{0}{\Psi}$ and by Propositions
\ref{prop2.4} and \ref{prop2.3}, we get $D(\Psi)=d^{\nabla}\Psi\in\Ga(\Cal H^{k+1})$,
so the explicit formula can be read off directly from \eqref{dn-def}.

\section{Interpretation via Cartan geometries}\label{3}
We start by reviewing the equivalent description of Riemannian manifolds as Cartan
geometries that satisfy a normalization condition, see \cite{Sharpe} and Section 1.5
of \cite{book} for details and generalities on Cartan geometries. The main advantage
of this picture to be exploited below is that linearized deformations of Cartan
geometries admit a simple description via a twisted de Rham sequence.

\subsection{The Cartan description of Riemannian metrics}\label{3.1}
Let $(M,g)$ be a Riemannian manifold of dimension $n\geq 2$. Then one can form the
\textit{orthonormal frame bundle} $p:\Cal OM\to M$ whose fiber over $x\in M$ consists
of all linear isomorphisms $u:\Bbb R^n\to T_xM$ such that $g_x(u(v),u(w))=\langle
v,w\rangle$. Here $\langle\ ,\ \rangle$ denotes the standard inner product on $\Bbb
R^n$. This is a locally trivial principal fiber bundle with structure group
$O(n)$. It comes with the principal right action $r:\Cal OM\x O(n)\to\Cal OM$
defined by $r(u,A)=u\o A$, so this is free and transitive on each fiber.

On $\Cal OM$, there is a canonical one-form $\th\in\Om^1(\Cal OM,\Bbb R^n)$, called
the \textit{soldering form}. This sends $\xi\in T_u\Cal OM$ to
$\th(u)(\xi):=u^{-1}(T_up(\xi))\in\Bbb R^n$, which readily implies that
$\ker(\th(u))=\ker(T_up)$, so $\th$ is \textit{strictly horizontal}. On the other
hand, taking $A\in O(n)$ one obtains the map $r^A:\Cal OM\to\Cal OM$ defined by
$r^A(u):=r(u,A)$, which satisfies $p\o r^A=r^A$. Differentiating this and using the
definition of $r$, one readily concludes that $(r^A)^*\th=A^{-1}\o\th$, so $\th$ is
$O(n)$-equivariant.

By construction, the tangent bundle $TM$ is the associated bundle $\Cal
OM\x_{O(n)}\Bbb R^n$ corresponding to the standard representation of $O(n)$ on $\Bbb
R^n$. Hence any principal connection on $\Cal OM$ induces a linear connection on $TM$
and is uniquely determined by this induced connection. It is easy to see that a
linear connection $\hat\nabla$ on $TM$ is induced from a principal connection on
$\Cal OM$ if and only if it is metric for $g$, i.e.\ $\hat\nabla g=0$ for the induced
connection on $S^2T^*M$ or
$$
\xi\cdot g(\eta,\zeta)=g(\nabla_\xi\eta,\zeta)+g(\eta,\nabla_\xi\zeta)
$$
for $\xi,\eta,\zeta\in\frak X(M)$. The standard description of a principal
connection on $\Cal OM$ is as a one-form $\gamma\in\Om^1(\Cal OM,\frak o(n))$, which
is $O(n)$-equivariant, i.e.~$(r^A)^*\gamma=\Ad(A^{-1})\o\gamma$ for the adjoint
representation $\Ad$ of $O(n)$ on $\frak o(n)$. In addition, one has to require that
$\gamma$ reproduces the generators of fundamental vector fields, so
$\gamma(\ze_X)=X$, where $\ze_X(u)=\tfrac{d}{dt}|_{t=0}r^{\exp(tX)}(u)$.

The Cartan description provides a neat way to combine $\th$ and $\gamma$ into a
single object. Consider the group $\Euc(n)$ of Euclidean motions on $\Bbb R^n$ and
let $\euc(n)$ be its Lie algebra. We can realize $\Euc(n):=O(n)\x\Bbb R^n$ with
multiplication defined by
$$
(A,v)\cdot (B,w):=(AB,Aw+v),
$$
so this is a semi-direct product, and mapping $w$ to the $\Bbb R^n$-component of
$(A,v)\cdot (0,w)$ realizes the usual action of $\Euc(n)$ on $\Bbb
R^n$. Correspondingly, $\euc(n)=\frak o(n)\x\Bbb R^n$ with the Lie bracket
\begin{equation}\label{euc-bracket}
[(X,v),(Y,w)]=([X,Y],Xw-Yv). 
\end{equation}

Given forms $\th$ and $\gamma$ as above, we define 
$\om:=\gamma\oplus\th\in\Om^1(\Cal OM,\euc(n))$. Now by construction $H:=O(n)$ is a
Lie subgroup of $G:=\Euc(n)$ and so we can restrict the adjoint representation $\Ad$
of $G$ to obtain a representation of $H$ on $\euc(n)$. This extends the adjoint
representation of $H$ on $\frak h:=\frak o(n)$ to a representation on $\frak
g:=\euc(n)$, so we again denote it by $\Ad$. Explicitly $\Ad(A)(X,v)=(\Ad(A)(X),Av)$,
so as representations of $O(n)$, we get $\euc(n)\cong \frak o(n)\oplus\Bbb
R^n$. Hence the equivariancy properties of $\th$ and $\gamma$ are equivalent to
\begin{equation}\label{om-equiv}
  (r^A)^*\om=\Ad(A^{-1})\o\om \qquad \forall A\in G. 
\end{equation}
Likewise, $\frak h$ is a Lie subalgebra of $\frak g$, so the conditions that
$\th(\ze_X)=0$ and that $\gamma$ reproduces the generators of fundamental vector fields
can be combined to
\begin{equation}\label{om-fund}
  \om(\ze_X)=X\qquad \forall X\in\frak h\subset\frak g.
\end{equation}
Finally, since
the kernel of $\th(u)$ consists of vertical vectors only, we conclude that
\begin{equation}\label{om-iso}
  \om(u):T_u\Cal OM\to\euc(n)\text{\ is a linear isomorphism\ }\forall u\in\Cal OM,
\end{equation}
since it has to be injective.

\eqref{om-equiv}--\eqref{om-iso} are the defining properties of a \textit{Cartan
  connection} of type $(G,H)$ on a principal $H$-bundle. The usual interpretation of
this is that $(\Cal OM,\om)$ is a ``curved analog'' of the homogeneous space $G/H$,
which is Euclidean $n$-space. For this \textit{homogeneous model} the corresponding
description is given by the principal $H$-bundle $G\to G/H$ and the left
Maurer-Cartan form. Indeed, such a \textit{Cartan geometry} $(\Cal OM,\om)$ on $M$ is
equivalent to a Riemannian metric $g$ on $M$ plus a linear connection on $TM$ such
that $\nabla g=0$, see Chapter 6 of \cite{Sharpe} for details.

\begin{thm}\label{thm3.1}
Let $M$ be a smooth manifold of dimension $n$, $\pi:P\to M$ a principal $O(n)$-bundle
and $\om\in\Om^1(P,\euc(n))$ a Cartan connection of type $(\Euc(n),O(n))$. Then $\om$
induces a Riemannian metric $g$ on $M$ and an isomorphism $\Ph:P\to \Cal OM$ of
principal fiber bundles such that $(\Ph^{-1})^*\om=\th\oplus\gamma$ for the soldering
form $\th$ and a principal connection form $\ga$.
\end{thm}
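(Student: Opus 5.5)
Write $\om=\ga\oplus\th$ with $\ga\in\Om^1(P,\frak o(n))$ and $\th\in\Om^1(P,\Bbb R^n)$ the two components under the $O(n)$-invariant decomposition $\euc(n)=\frak o(n)\oplus\Bbb R^n$. The plan is to show separately that $\th$ gives the metric and frame bundle and that $\ga$ becomes a principal connection.

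First we study $\th$. By \eqref{om-fund}, $\th(\ze_X)=0$ for all $X\in\frak o(n)$. The fundamental fields span the vertical subbundle $\ker(T\pi)$. Since $\om(u)$ is an isomorphism by \eqref{om-iso}, $\th(u)$ is surjective with kernel of dimension $\dim\frak o(n)$. Hence $\ker(\th(u))=\ker(T_u\pi)$, and $\th(u)$ descends to a linear isomorphism $\phi_u:T_{\pi(u)}M\to\Bbb R^n$. Equivariance \eqref{om-equiv} gives $(r^A)^*\th=A^{-1}\o\th$, so $\phi_{u\cdot A}=A^{-1}\o\phi_u$.

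Next we define the metric and the bundle map. Put $g_x(\xi,\eta):=\langle\phi_u(\xi),\phi_u(\eta)\rangle$ for any $u\in\pi^{-1}(x)$. This is independent of $u$ because $A\in O(n)$ preserves $\langle\ ,\ \rangle$. It is smooth, as one sees using local sections of $P$. Define $\Ph(u):=\phi_u^{-1}:\Bbb R^n\to T_{\pi(u)}M$. By construction $\Ph(u)$ is an orthonormal frame for $g$. The equivariance above gives $\Ph(u\cdot A)=\Ph(u)\o A$, so $\Ph$ is a smooth $O(n)$-equivariant map covering the identity, hence an isomorphism of principal bundles. Unwinding the definition of the soldering form, $\th^{\Cal OM}(\Ph(u))(T\Ph\cdot\xi)=\Ph(u)^{-1}(T\pi\,\xi)=\th(u)(\xi)$, so $\Ph^*\th^{\Cal OM}=\th$.

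Finally we handle $\ga$. The form $\ga$ is $O(n)$-equivariant with values in $\frak o(n)$ by \eqref{om-equiv}, and it reproduces generators of fundamental fields by \eqref{om-fund}. It is therefore a principal connection form, and $(\Ph^{-1})^*\ga$ is a principal connection form on $\Cal OM$. Together these give $(\Ph^{-1})^*\om=\th\oplus\ga$.

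The main technical point is the first step, identifying $\ker\th(u)$ with the vertical space by the dimension count. The remaining steps are routine once that is in place.
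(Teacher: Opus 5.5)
Your argument is correct and is essentially the paper's own sketch. Your observation that $\ker\th(u)=\ker(T_u\pi)$ is the same as the paper's statement that $\om(u)$ maps the vertical space onto $\frak o(n)$ and so induces $T_uP/\ker(T_u\pi)\cong\euc(n)/\frak o(n)=\Bbb R^n$; from there both proofs transport $\langle\ ,\ \rangle$, use equivariance to get independence of $u$ and $\Ph(u\cdot A)=\Ph(u)\o A$, and read off the soldering form and the principal connection $\ga$, with you making explicit a few points the paper leaves to the reader (the dimension count, smoothness via local sections, and the check $\Ph^*\th^{\Cal OM}=\th$).
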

\begin{proof}[Sketch of proof] Given $x\in M$, choose $y\in P$ with $\pi(y)=x$ and
  observe that the linear isomorphism $\om(y):T_yP\to\frak g$ maps $\ker(T_y\pi)$
  isomorphically onto $\frak h$. Thus it induces a linear isomorphism
  $T_yP/\ker(T_y\pi)\to\frak g/\frak h=\Bbb R^n$, and via $T_y\pi$, the left hand
  space is isomorphic to $T_xM$.  We denote the inverse of this by $\Ph(y):\Bbb
  R^n\to T_xM$ and observe that for a different choice $\tilde y\in P$, we obtain
  $\Ph(\tilde y)=\Ph(y)\o A$ for some $A\in O(n)$ by equivariancy of $\om$. Using
  $\Ph(y)$ to move over $\langle\ ,\ \rangle$ to an inner product on $T_xM$, the
  result is independent of the choice of $y$ and one proves that these inner products
  fit together to define a Riemannian metric $g$ on $M$. But then by definition
  $\Ph(y)\in\Cal OM$ and $\Ph:P\to\Cal OM$ defines an isomorphism of principal
  bundles and by construction, the $\Bbb R^n$-component of $(\Ph^{-1})^*\om$
  reproduces the soldering form $\th$. The defining properties of $\om$ then easily
  imply that the $\frak o(n)$-component of $(\Ph^{-1})^*\om$ is a principal
  connection form on $\Cal OM$.
\end{proof}

\subsection{The Levi-Civita connection}\label{3.2}
To complete the Cartan description of Riemannian manifolds, we have to implement
existence and uniqueness of the Levi-Civita connection. This can be nicely proved in
the setting of Cartan geometries, see \cite{Sharpe}, but here we just use and
interpret the well known facts from Riemannian geometry. A linear connection on $TM$
has a torsion $T\in \Om^2(M,TM)$ and a curvature $R\in\Om^2(M,L(TM,TM))$ and for
metric connections, the curvature has values in $\frak o(TM)\subset L(TM,TM)$. The
torsion and the curvature admit a nice description in the Cartan picture, which needs
one more well known result. Recall that for a principal $H$-bundle $\pi:P\to M$ and
any representation $\rho$ of $O(n)$ on a vector space $V$, one can form the
associated bundle $P\x_HV$. It is well known that then sections $s$ of $P\x_HV$ are
in bijective correspondence with smooth functions $f:P\to V$ which are
$O(n)$-equivariant in the sense that $f\o r^A=\rho_{A^{-1}}\o f$, where $\rho_B$
denotes the action of $B$ on $V$. More generally, one relates differential forms with
values in $P\x_H V$, i.e.\ sections of the bundle $\La^kT^*M\otimes(P\x_HV)$ to
$V$-valued differential forms on $P$. Recall that for each vector field $\xi\in\frak
X(M)$, there exist lifts $\widetilde{\xi}\in\frak X(P)$,
i.e.\ $T_y\pi(\widetilde{\xi}(y))=\xi(\pi(y))$ for all $y\in P$, which are
$H$-invariant, i.e.\ satisfy $(r^A)^*\widetilde{\xi}=\widetilde{\xi}$ for all $A\in
H$. For reference we formulate the correspondence as a proposition, see Section 19.14
in \cite{Michor:topics} for a proof.
\begin{prop}\label{prop3.2}
  The space $\Om^k(M,P\x_H V)$ is in natural bijective correspondence with the
  subspace $\Om^k_{\hor}(P,V)^H$ consisting of those $k$-forms $\ph$ on $P$, which are
  $H$-equivariant in the sense that $(r^A)^*\ph=\rho_{A^{-1}}\o\ph$ and horizontal in
  the sense that they vanish upon insertions of one vector in $\ker(Tp)$.

  Explicitly, the form $\underline{\ph}\in \Om^k(M,P\x_H V)$ corresponding to $\ph\in
  \Om^k_{\hor}\Om^k(P,V)^H$ maps vector fields $\xi_1,\dots,\xi_k\in\frak X(M)$ to
  the section of $P\x_HV$ corresponding to the $H$-equivariant function
  $\ph(\tilde\xi_1,\dots,\tilde\xi_k)$. Here $\tilde\xi_i\in\frak X(P)$ is an
  $H$-invariant lift of $\xi_i$ for each $i$.
\end{prop}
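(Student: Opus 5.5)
We will first treat functions, i.e.\ $k=0$, and then reduce the general case to it. Since $H$ acts freely and transitively on the fibers of $\pi:P\to M$, the point $y\in P$ determines the linear isomorphism $v\mapsto[y,v]$ from $V$ onto the fiber of $P\x_HV$ over $\pi(y)$. Given a section $s$, we define $f(y)\in V$ by requiring $[y,f(y)]=s(\pi(y))$. The relation $[y\cdot A,v]=[y,\rho_A(v)]$ then gives $f\o r^A=\rho_{A^{-1}}\o f$. Conversely, an equivariant $f$ descends to the section $x\mapsto[y,f(y)]$ for any $y\in\pi^{-1}(x)$, and equivariance shows this is independent of the choice of $y$. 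Smoothness in both directions is a local statement. Over a local section $\sigma:U\to P$ we have $P|_U\cong U\x H$ and $(P\x_HV)|_U\cong U\x V$, and in these trivializations $f(\sigma(x)\cdot A)=\rho_{A^{-1}}(s_\sigma(x))$. This proves the case $k=0$.

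For general $k$, we start from $\ph\in\Om^k_{\hor}(P,V)^H$ and define $\underline\ph$ pointwise. Given $\xi_1,\dots,\xi_k\in T_xM$ and $y\in\pi^{-1}(x)$, choose any lifts $\tilde\xi_i\in T_yP$ and set $\underline\ph(x)(\xi_1,\dots,\xi_k):=[y,\ph(y)(\tilde\xi_1,\dots,\tilde\xi_k)]$.

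This is well defined for two reasons. Horizontality together with multilinearity means that changing a lift by an element of $\ker(T_y\pi)$ does not change the value. Moving from $y$ to $y\cdot A$, we may use $T_yr^A(\tilde\xi_i)$ as lifts, since $\pi\o r^A=\pi$. Equivariance then gives $[y\cdot A,\rho_{A^{-1}}\ph(y)(\tilde\xi_i)]=[y,\ph(y)(\tilde\xi_i)]$. The result is alternating, multilinear and smooth, where smoothness is again checked via a local section $\sigma$, using lifts $T\sigma(\xi_i)$. Evaluating on vector fields $\xi_i$ with $H$-invariant lifts $\tilde\xi_i$, the function $y\mapsto\ph(\tilde\xi_1,\dots,\tilde\xi_k)(y)$ is $H$-equivariant, because $(r^A)^*\tilde\xi_i=\tilde\xi_i$. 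By the $k=0$ case it is the function corresponding to $\underline\ph(\xi_1,\dots,\xi_k)$, which is the explicit formula in the statement.

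For the inverse map, given $\al\in\Om^k(M,P\x_HV)$ we define $\ph(y)(X_1,\dots,X_k)$ as the unique $v$ with $[y,v]=\al(\pi(y))(T_y\pi X_1,\dots,T_y\pi X_k)$. This form is clearly horizontal. It is equivariant by the same computation as before, and smoothness again follows from a local trivialization. The two constructions are obviously mutually inverse. Naturality is immediate, since only the maps $\pi$, $r^A$ and the quotient map $P\x V\to P\x_HV$ enter the construction.

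The only step needing some care is the existence of $H$-invariant lifts in the explicit formula. The pointwise definition avoids needing them for the bijection itself. For the formula, we obtain such lifts as horizontal lifts with respect to any principal connection on $P$, which exists by a partition-of-unity argument. With that in place the argument is routine, and the main obstacle is just organizing the well-definedness checks cleanly.
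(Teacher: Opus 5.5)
Your proof is correct. It matches the standard argument: the paper gives no proof of its own here and simply cites Section 19.14 of Michor's \emph{Topics}. There the form is likewise defined pointwise as $[y,\ph(y)(\tilde\xi_1,\dots,\tilde\xi_k)]$, horizontality and equivariance give independence of the lifts and of $y$, and smoothness is checked over local sections.
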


Now given a Cartan connection $\om\in\Om^1(P,\euc(n))$ of type $(G,H)$, we can define
a $\euc(n)$-valued two-form $K\in\Om^2(P,\euc(n))$ by
\begin{equation}\label{K-def}
K(\xi,\eta)=d\om(\xi,\eta)+[\om(\xi),\om(\eta)] 
\end{equation}
for $\xi,\eta\in\frak X(M)$, where the bracket is in $\euc(n)$. The defining
properties of a Cartan connection easily imply (see \cite{Sharpe}) that
$K\in\Om^2_{\hor}(P,\frak g)^H$ so that it corresponds to
$\ka\in\Om^2(M,P\x_H\euc(n))$. In general, this \textit{Cartan curvature} is a
complicated object but in the Riemannian setting things become easy. As we have seen
above, $\euc(n)=\frak o(n)\oplus\Bbb R^n$ as a representation of $O(n)$, so passing
to associated bundles, we see that $P\x_H\euc(n)=\frak o(TM)\oplus TM$. Hence $\ka$
decomposes into $R\in\Om^2(M,\frak o(TM))$ and $T\in\Om^2(M,TM)$ and these turn out
to be the curvature and the torsion of the metric connection on $TM$ encoded by
$\om$. Existence and uniqueness of the Levi-Civita connection then imply

\begin{thm}\label{thm3.2}
For a Riemannian manifold $(M,g)$ of dimension $n$, there is a unique Cartan
connection $\om$ on $\Cal OM$ such that the form $K$ defined in \eqref{K-def} has
values in $\frak o(n)\subset\euc(n)$.

If $P\to M$ is a principal $O(n)$-bundle endowed with a Cartan connection
$\widehat{\om}$ that induces $g$ and satisfies the same condition on its (Cartan)
curvature, there is an isomorphism $\Ph:P\to\Cal OM$ of principal bundles such that
$\Ph^*\om=\widehat{\om}$.
\end{thm}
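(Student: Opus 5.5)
Existence comes from the Levi-Civita connection, and uniqueness from translating torsion-freeness into the Cartan picture together with Theorem \ref{thm3.1}.

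\emph{Existence.} Let $\gamma\in\Om^1(\Cal OM,\frak o(n))$ be the principal connection form inducing the Levi-Civita connection $\nabla$ on $TM=\Cal OM\x_{O(n)}\Bbb R^n$, and put $\om:=\gamma\oplus\th$. We have already seen that $\om$ is a Cartan connection. Using \eqref{euc-bracket}, the $\Bbb R^n$-component of $K$ in \eqref{K-def} is
$$d\th(\xi,\eta)+\gamma(\xi)\th(\eta)-\gamma(\eta)\th(\xi),$$
which is the first structure equation for the torsion. Concretely, I evaluate it on $O(n)$-invariant lifts $\tilde\xi,\tilde\eta$ of vector fields on $M$. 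Then $\th(\tilde\eta)$ is the equivariant function of $\eta$, and $\tilde\xi\cdot\th(\tilde\eta)+\gamma(\tilde\xi)\th(\tilde\eta)$ is the function of $\nabla_\xi\eta$ (the standard formula for the induced covariant derivative). Hence the component equals the function corresponding to $\nabla_\xi\eta-\nabla_\eta\xi-[\xi,\eta]$ via Proposition \ref{prop3.2}, i.e.\ to the torsion $T$. The $\frak o(n)$-component likewise gives $R$. Since the Levi-Civita connection is torsion-free, $K$ has values in $\frak o(n)$.

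\emph{Uniqueness on $\Cal OM$.} Suppose $\om'$ is any Cartan connection on $\Cal OM$ with $\frak o(n)$-valued curvature. Here I read ``Cartan connection on $\Cal OM$'' as one inducing the given bundle structure, i.e.\ with $\Bbb R^n$-component the soldering form $\th$; this is the normalization implicit in Theorem \ref{thm3.1}. Its $\frak o(n)$-component $\gamma'$ is then a principal connection form by Theorem \ref{thm3.1}, so it induces a metric linear connection $\nabla'$ on $TM$. The same computation shows that the $\Bbb R^n$-part of its curvature is the torsion of $\nabla'$. Thus $\nabla'$ is metric and torsion-free, hence equals $\nabla$, so $\gamma'=\gamma$ and $\om'=\om$.

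\emph{Second statement.} Apply Theorem \ref{thm3.1} to $(P,\widehat\om)$. This gives an isomorphism $\Ph:P\to\Cal OM$ with $(\Ph^{-1})^*\widehat\om=\th\oplus\gamma'$ for some principal connection form $\gamma'$, where $\th$ is the soldering form of the induced metric, which is $g$ by hypothesis. Pulling back by a bundle isomorphism is compatible with $d$ and with brackets, so $(\Ph^{-1})^*\widehat\om$ is a Cartan connection on $\Cal OM$ whose curvature is $(\Ph^{-1})^*\widehat K$, still $\frak o(n)$-valued. By uniqueness, $(\Ph^{-1})^*\widehat\om=\om$, i.e.\ $\Ph^*\om=\widehat\om$.

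The main obstacle is the identification of the $\Bbb R^n$-component of $K$ with the torsion. This needs careful sign bookkeeping: the sign conventions in \eqref{euc-bracket}, the factor in $d\om(\xi,\eta)$ on invariant lifts (where $[\tilde\xi,\tilde\eta]$ lifts $[\xi,\eta]$), and the fact that $\gamma(\tilde\xi)$ acts on equivariant functions in a way that matches the induced covariant derivative. I would first check this on $\Bbb R^n$-component terms with $\tilde\xi$ horizontal for $\gamma$, where $\gamma(\tilde\xi)=0$ and the formula reduces to $\tilde\xi\cdot\th(\tilde\eta)$.
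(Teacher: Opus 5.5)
Your proof is correct and follows essentially the paper's argument. The paper identifies the $\mathbb R^n$- and $\mathfrak o(n)$-components of the Cartan curvature with the torsion and curvature of the metric connection encoded by $\omega$ and then invokes existence and uniqueness of the Levi-Civita connection; you carry this out in detail and reduce the second statement to uniqueness via Theorem~\ref{thm3.1}. Your reading that uniqueness is meant among Cartan connections on $\mathcal OM$ with $\mathbb R^n$-component the soldering form is the right one and is implicit in the paper: otherwise $\gamma\oplus c\,\theta$ for any constant $c\neq 0,1$ is another Cartan connection with $\mathfrak o(n)$-valued curvature, and for $c=-1$ it even induces $g$.
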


\subsection{The connections $\nabla^{\Cal A}$ and $\tn^{\Cal A}$ in the Cartan picture}\label{3.3}
Consider the Cartan geometry $(\Cal OM,\om)$ associated to a Riemannian manifold
$(M,g)$. The adjoint tractor bundle is then defined as $\Cal AM:=\Cal
OM\x_{O(n)}\euc(n)$ so sections of $\Cal AM$ are in bijective correspondence with
$O(n)$-equivariant smooth functions $f:\Cal OM\to\euc(n)$. Since $\om$ trivializes
$T\Cal OM$, smooth functions $f:\Cal OM\to\euc(n)$ are in bijective correspondence
with vector fields $\tilde\xi\in\frak X(\Cal OM)$ via
$\tilde\xi(u)=(\om(u))^{-1}(f(u))$. In this picture, equivariancy of $f$ is
equivalent to the fact that $\tilde\xi$ is $O(n)$-invariant, i.e.\ that
$(r^A)^*\tilde\xi=\tilde\xi$ for any $A\in O(n)$. Hence $\Ga(\Cal AM)\cong \frak
X(\Cal OM)^{O(n)}$, which provides an alternative geometric interpretation of
sections of $\Cal AM$. From Section \ref{3.2}, we know that $\Cal AM\cong \frak
o(TM)\oplus TM$ and the resulting projection $\Ga(\Cal AM)\to\Ga(TM)=\frak X(M)$ has
a neat interpretation in this alternative picture. An $O(n)$-invariant vector field
is automatically projectable and thus projects to a vector field on $M$ which is
exactly is the projection of the corresponding section of $\Cal AM$. Hence we can
view sections of $\Cal AM$ as $O(n)$-invariant lifts of vector fields on $M$ and the
construction readily implies that the sections of $\Cal AM$ with trivial component in
$\Ga(\frak o(TM))$ are exactly the horizontal lifts with respect to the Levi-Civita
connection.

It is well known that differentiating equivariant functions with respect to these
horizontal lifts again leads to equivariant functions, and this describes the
Levi-Civita connection on any natural vector bundle. However, the Cartan connection
$\om$ gives rise to another operation on $\Cal AM$. Given $f\in C^\infty(\Cal
OM,\euc(n))^{O(n)}$ we consider $d^\om f\in\Om^1(\Cal OM,\euc(n))$ defined by
  \begin{equation}\label{dom-def}
    d^\om f=df+[\om,f],
  \end{equation}
  where $[\om,f](\tilde\eta)(u)=[\om(\tilde\eta)(u),f(u)]$ for any
  $\tilde\eta\in\frak X(\Cal OM)$ with the bracket in $\euc(n)$.

On the other hand, for $\tilde\xi\in\frak X(\Cal OM)^{O(n)}$, any local flow of
$\tilde\xi$ is $O(n)$-equivariant, so we can naturally view $\tilde\xi$ as an
infinitesimal automorphism of the principal bundle $\Cal OM$. It is natural to ask
what it means that these flows preserves $\om$ and of course this is equivalent to
vanishing of the Lie derivative $\Cal L_{\tilde\xi}\om$. This operation is closely
related to but surprisingly not equal to $d^{\om}$:

\begin{thm}\label{thm3.3}
  Let $f\in C^{\infty}(\Cal OM,\euc(n))^{O(n)}$ be an equivariant smooth function.
  
  (1) The one-form $d^\om f$ lies in $\Om^1_{\hor}(\Cal OM,\euc(n)))$, so $d^\om$
  induces an operation $\Ga(\Cal AM)\to\Om^1(M,\Cal AM)$. This coincides with the
  adjoint tractor connection $\nabla^{\Cal A}$ from Definition \ref{def2.2}.

  (2) If $f$ corresponds to $\tilde\xi\in\frak X(\Cal OM)^{O(n)}$ then
  $$
  \Cal L_{\tilde\xi}\om = d^\om f+K(\tilde\xi,\_)\in \Om^1(\Cal OM,\euc(n)).  
  $$ In particular, this is horizontal and $O(n)$-equivariant. Thus also
  $\tilde\xi\mapsto \Cal L_{\tilde\xi}\om$ induces an operation $\Ga(\Cal
  AM)\to\Om^1(M,\Cal AM)$ and this coincides with the deformation connection
  $\tn^{\Cal A}$ from Definition \ref{def2.2}.
\end{thm}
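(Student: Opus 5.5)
The plan is to work throughout with equivariant functions on $\Cal OM$ and their correspondence with sections and forms on $M$ from Proposition \ref{prop3.2}. The only input about $\om$ is that $\om=\gamma\oplus\th$ with $\gamma$ the Levi-Civita connection form (Theorem \ref{thm3.2}). I write $f=(X_f,v_f)$ with $X_f:\Cal OM\to\frak o(n)$ and $v_f:\Cal OM\to\Bbb R^n$, corresponding to $\Ph\in\Ga(\frak o(TM))$ and $\eta\in\frak X(M)$. Note that the paper writes $\om=\gamma\oplus\th$ in Section \ref{3.1} but $\th\oplus\gamma$ in Theorem \ref{thm3.1}. I fix $\euc(n)=\frak o(n)\x\Bbb R^n$, so $f$ corresponds to $\binom{\eta}{\Ph}$ with components swapped relative to Definition \ref{def2.2}.

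For (1), I first check horizontality. Inserting a fundamental field $\ze_X$ for $X\in\frak o(n)$, the infinitesimal form of equivariance gives $df(\ze_X)=-\operatorname{ad}(X)f=-[X,f]$. By \eqref{om-fund} we have $[\om(\ze_X),f]=[X,f]$, so $d^\om f(\ze_X)=0$. Equivariance of $d^\om f$ is immediate from equivariance of $f$ and \eqref{om-equiv}, since the bracket is $\Ad$-invariant. Hence $d^\om f\in\Om^1_{\hor}(\Cal OM,\euc(n))^{O(n)}$, and it defines an element of $\Om^1(M,\Cal AM)$. To identify it, I evaluate on the Levi-Civita horizontal lift $\xi^h$ of $\xi\in\frak X(M)$; this is an admissible invariant lift in Proposition \ref{prop3.2}. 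For such a lift $\gamma(\xi^h)=0$ and $\th(\xi^h)$ is the equivariant function of $\xi$.

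The term $df(\xi^h)$ is componentwise the equivariant function of $\binom{\nabla_\xi\eta}{\nabla_\xi\Ph}$, by the standard description of the induced connection. By \eqref{euc-bracket}, the bracket term is
$$[(0,\th(\xi^h)),(X_f,v_f)]=(0,-X_f\th(\xi^h)),$$
which corresponds to $-\Ph(\xi)$ in the $TM$-slot. Together these give exactly $\nabla^{\Cal A}_\xi\binom{\eta}{\Ph}$.

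For (2), I will use Cartan's formula $\Cal L_{\tilde\xi}\om=i_{\tilde\xi}d\om+d(\om(\tilde\xi))$, with $\om(\tilde\xi)=f$ by definition of the correspondence. Evaluating on $\tilde\eta$ and using \eqref{K-def} gives
$$d\om(\tilde\xi,\tilde\eta)=K(\tilde\xi,\tilde\eta)-[f,\om(\tilde\eta)]=K(\tilde\xi,\tilde\eta)+[\om(\tilde\eta),f].$$
Hence $\Cal L_{\tilde\xi}\om=d^\om f+K(\tilde\xi,\_)$. Horizontality and equivariance of $K(\tilde\xi,\_)$ follow from $K\in\Om^2_{\hor}(\Cal OM,\euc(n))^{O(n)}$ and $O(n)$-invariance of $\tilde\xi$. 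Combined with (1), this shows that the operation descends to $\Ga(\Cal AM)\to\Om^1(M,\Cal AM)$. By Theorem \ref{thm3.2}, $K$ is $\frak o(n)$-valued and represents $R$. Since $\tilde\xi$ lifts $\eta$, the extra term evaluated on $\xi^h$ represents $R(\eta,\xi)=-R(\xi,\eta)$ in the $\frak o(TM)$-slot. This reproduces $\tn^{\Cal A}$.

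The main obstacle is bookkeeping of signs and conventions rather than any conceptual step. The relevant conventions are: the convention $d\om(\xi,\eta)=\xi\cdot\om(\eta)-\eta\cdot\om(\xi)-\om([\xi,\eta])$; the infinitesimal equivariance sign for $df(\ze_X)$; and, above all, checking that the $\frak o(n)$-part of $K$ is $+R$ rather than $-R$ under the paper's conventions. I would verify the last point first, using $K(\xi^h_1,\xi^h_2)=-\gamma([\xi^h_1,\xi^h_2])$ on the $\frak o(n)$-component, and compare it with \eqref{curv-W}.
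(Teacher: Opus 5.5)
Your proposal is correct and follows essentially the same route as the paper. In (1) you check horizontality and equivariance of $d^\om f$ on fundamental fields and then identify it with $\nabla^{\Cal A}$ by evaluating on Levi-Civita horizontal lifts. In (2) you expand $\Cal L_{\tilde\xi}\om$ (via Cartan's formula, which is what the paper's direct expansion amounts to) and insert the definition of $K$. Your explicit check that the $K$-term contributes $R(\eta,\xi)=-R(\xi,\eta)$ in the $\frak o(TM)$-slot, using that the $\frak o(n)$-part of $K$ is $+R$, makes explicit a sign that the paper takes for granted from Section \ref{3.2}.
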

\begin{proof}
  (1) Since $d$ commutes with pullbacks, we get
  $$
  (r^A)^*df=d((r^A)^*f)=d(f\o r^A).
  $$
  By equivariancy of $f$, we have $f\o r^A=\Ad(A^{-1})\o f$ and since
  $\Ad(A^{-1})$ only acts on the values of $f$, the definition of the exterior
  derivative readily implies that $d(\Ad(A^{-1})\o f)=\Ad(A^{-1})\o df$. Thus $f\in
  C^{\infty}(\Cal OM,\frak g)^{O(n)}$ implies $df\in\Om^1(\Cal OM,\frak g)^{O(n)}$.

  The definition of the pullback implies that $(r^A)^*([\om,f])=[(r^A)^*\om,f\o
    r^A]$. Equivariancy of $\om$ and $f$ imply that this coincides with
  $$
  [\Ad(A^{-1})\o \om,\Ad(A^{-1})\o f]=\Ad(A)^{-1}\o [\om,f],
  $$ where we have used that $\Ad(A^{-1})$ is a homomorphism of Lie algebras in the
  last step. Together with the above, this shows that $d^\om f$ is
  $O(n)$-equivariant. To show that $d^\om f$ is horizontal, it suffices to show that
  for any $X\in\frak o(n)$, $d^{\om} f$ vanishes on the fundamental vector field
  $\ze_X$. By definition of the fundamental vector field, we can compute
  $df(\ze_X)(u)=(\ze_X\cdot f)(u)$ as
  \begin{align*}
  \tfrac{d}{dt}|_{t=0}f(u\o\exp(tX))=&
  \tfrac{d}{dt}|_{t=0}(\Ad(\exp(-tX))(f(u))\\ =&-[X,f(u)]=-[\om(\ze_X)(u),f(u)], 
  \end{align*}
  which implies that $d^\om f(\ze_X)=0$.

  Knowing that $d^{\om}f$ is horizontal and equivariant, we can describe the induced
  operation $\Ga(\Cal AM)\to \Om^1(M,\Cal AM)$ by evaluating it on any lift of a
  vector field $\xi\in\frak X(M)$. We use the horizontal lift $\xi^{\hor}$ with
  respect to the Levi-Civita connection, which by definition has the property that
  $\om(\xi^{hor})$ has values in $\Bbb R^n\subset\euc(n)$ and is the equivariant
  function corresponding to $\xi$. It is well known that the equivariant function
  $\xi^{\hor}\cdot f$ corresponds to the (Levi-Civita) covariant derivative of the
  section $s\in\Ga(\Cal AM)$ corresponding to $f$. Writing $s=\binom{\eta}{\Phi}$ as
  before, this hence produces $\binom{\nabla_{\xi}\eta}{\nabla_\xi\Phi}$. On the
  other hand, formula \eqref{euc-bracket} with $X=0$ immediately implies that
  $[\om(\xi^\hor),f]$ corresponds to $\binom{-\Ph(\xi)}{0}$ and this completes the
  proof of (i).

(2) Since $K\in\Om^2(\Cal OM,\euc(n))$ is horizontal and $O(n)$-equivariant, one
  immediately concludes that for $\tilde\xi\in\frak X(\Cal OM)^{O(n)}$ we get
  $K(\tilde\xi,\_ )\in\Om^1_{\hor}(\Cal OM,\frak g)^{O(n)}$. Hence by part (1), the
  right hand side of the claimed equation is horizontal and equivariant. Hence it
  induces an operation $\Ga(\Cal AM)\to \Om^1(M,\Cal AM)$ and part (1) and the
  relation of the form $\ka\in\Om^2(M,\Cal AM)$ induced by $K$ to the Riemann
  curvature $R$ show that this equals $\tn^{\Cal A}$.

  Thus it remains to show the relation to $\Cal L_{\tilde\xi}\om$. For a
  vector field $\tilde\eta\in\frak X(\Cal OM)$, we by definition get
  \begin{align*}
(\Cal L_{\tilde\xi}\om)(\tilde\eta)&=\tilde\xi\cdot\om(\tilde\eta)-
    \om([\tilde\xi,\tilde\eta])=d\om(\tilde\xi,\tilde\eta)+
    \tilde\eta\cdot\om(\tilde\xi)\\ &=K(\tilde\xi,\tilde\eta)-
                      [\om(\tilde\xi),\om(\tilde\eta)]+\tilde\eta\cdot\om(\tilde\xi).
  \end{align*}
  But writing $f:=\om(\tilde\xi)$ and using skew symmetry of the Lie bracket, the
  last two terms just give $d^\om f(\tilde\eta)$.
\end{proof}

\subsection{The Cartan deformation sequence}\label{3.4}
Both $d^\om$ and the modification by $K$ can be extended to operators acting on
horizontal equivariant forms of higher degree. This can be verified directly, but in
our situation it is easier to just deduce this from the covariant exterior
derivatives $d^{\nabla^{\Cal A}}$ and $d^{\tn^{\Cal A}}$. Indeed, for
$\ph\in\Om^k_{\hor}(\Cal OM,\frak g)^{O(n)}$ one has to consider
$d^{\om}\ph:=d\ph+[\om,\ph]$ and $\widetilde{d}^{\om}\ph:=d^{\om}\ph+(-1)^ki_\ph K$,
where for $\xi_i\in\frak X(\Cal OM)$ one puts
\begin{gather}\label{br-def}
[\om,\ph](\xi_0,\dots,\xi_k):=\textstyle\sum_{i=0}^k(-1)^i
[\om(\xi_i),\ph(\xi_0,\dots,\widehat{\xi_i},\dots,\xi_k)] \\
\label{iK-def}  (i_\ph K)(\xi_0,\dots,\xi_k):=\textstyle\sum_{i=0}^k(-1)^{k-i}
  K(\om^{-1}(\ph(\xi_0,\dots,\widehat{\xi_i},\dots,\xi_k)),\xi_i), 
\end{gather}
with the hats denoting omission.

The advantage of the ``upstairs'' sequence $(\Om^*_{\hor}(\Cal OM,\frak
g)^{O(n)},\widetilde{d}^\om)$ is that the first steps in this sequences have a direct
interpretation as a deformation sequence. The first step of this was already
discussed in Section \ref{3.3}: Sections of $\Cal AM$ can be naturally viewed as
$O(n)$-invariant vector fields on $\Cal OM$ and of course any local flow of such a
vector is $O(n)$-equivariant and hence a local automorphism of the principal bundle
$\Cal OM$. Conversely, for an open interval $I\subset\Bbb R$ with $0\in I$ and a
smooth family $\{\Ps_t:t\in I\}$ of such automorphisms, $\frac{d}{dt}|_{t=0}\Psi_t$
is a an $O(n)$-invariant vector field on $\Cal OM$. Thus $\Ga(\Cal AM)$ can be
interpreted as the space of infinitesimal principal bundle automorphisms of $\Cal
OM$.

Next, by definition of a Cartan connection, the difference $\widehat{\om}-\om$ of two
Cartan connections $\om,\widehat{\om}\in\Om^1(\Cal OM,\frak g)$ is equivariant and
vanishes on fundamental vector fields, so
$\widehat{\om}-\om\in\Om^1_{\hor}(\Cal OM,\frak g)^{O(n)}$. Conversely, for a Cartan
connection $\om$ and $\tau\in\Om^1_{\hor}(\Cal OM,\frak g)^{O(n)}$, also $\om+\tau$
is a Cartan connection, provided that it restricts to a linear isomorphism on each
tangent space.  Thus, Cartan connections form an open subset in an affine space
modeled on $\Om^1_{\hor}(\Cal OM,\frak g)^{O(n)}$. In particular, for an open interval
$I\subset\Bbb R$ with $0\in I$ and a smooth family $\{\om_t:t\in I\}$ of Cartan
connections, we can (point-wise) form a derivative
$\frac{d}{dt}|_{t=0}\om_t\in\Om^1_{\hor}(\Cal OM,\frak g)^{O(n)}$. Hence we can view
$\Om^1_{\hor}(\Cal OM,\frak g)^{O(n)}$ as the space of infinitesimal deformations of
a Cartan connection $\om$.

In degree two, the interpretation is even easier, since the curvature of a Cartan
connection by definition lies in $\Om^2_{\hor}(\Cal OM,\frak g)^{O(n)}$, so for a
smooth family $\om_t$ with curvatures $K_t$ we can form $\frac{d}{dt}|_{t=0}K_t$ to
obtain an element in this space.

The interpretation of the first operator in the sequence follows readily from part
(2) of Theorem \ref{thm3.3}:  For a vector field $\xi\in\frak X(\Cal OM)$ with local
flows $\Fl^{\xi}_t$, it is well known that $\frac{d}{dt}|_{t=0}(\Fl^\xi_t)^*\om=\Cal
L_{\xi}\om$. Hence the first operator $\widetilde{d}^{\om}$ in the sequence computes
the infinitesimal change of Cartan connection $\om$ caused by an infinitesimal
automorphism of the principal bundle $\Cal OM$. In particular, its kernel consists
exactly of the infinitesimal automorphisms of the Cartan geometry $(\Cal OM,\om)$.

The interpretation of the second operator involves a subtlety, however, which was
already observed in \cite{deformations}: For a smooth family $\om_t$ of Cartan
connections with curvatures $K_t$, we want to compute $\frac{d}{dt}|_{t=0}K_t$ from
$\frac{d}{dt}|_{t=0}\om_t$. We have to take into account, however, that the
interpretation of horizontal equivariant forms on $\Cal OM$ as geometric objects on
$M$ involves a Cartan connection. So the right move is to convert $K_t$ to an object
on $M$ using $\om_t$, then apply $\frac{d}{dt}|_{t=0}$ and then convert the result
back using $\om_0$. Let us phrase things in the picture of geometric objects on $M$.

\begin{prop}\label{prop3.4}
  Let $(M,g)$ be a Riemannian manifold with orthonormal frame bundle $\Cal OM$ an let
  $\om$ be the Cartan connection on $\Cal OM$ defined by the Levi-Civita connection
  as in Theorem \ref{thm3.2}.

  (1) Viewing a section $s\in\Ga(\Cal AM)$ as an infinitesimal principal bundle
  automorphism of $\Cal OM$, the induced infinitesimal change of $\om$ corresponds to
  $\tn^{\Cal A}s\in\Om^1(M,\Cal AM)$.

  (2) Viewing $\ph\in\Om^1(M,\Cal AM)$ as an infinitesimal deformation of the Cartan
  connection $\om$, the induced infinitesimal change of curvature corresponds to
  $d^{\tn^{\Cal A}}\ph\in\Om^2(M,\Cal AM)$.
\end{prop}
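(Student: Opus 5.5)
Part (1) is essentially a restatement of Theorem \ref{thm3.3}(2). If $\tilde\xi\in\frak X(\Cal OM)^{O(n)}$ corresponds to $s$, its local flows $\Fl^{\tilde\xi}_t$ are principal bundle automorphisms and
$$\tfrac{d}{dt}|_{t=0}(\Fl^{\tilde\xi}_t)^*\om=\Cal L_{\tilde\xi}\om .$$
The pullbacks $(\Fl^{\tilde\xi}_t)^*\om$ are Cartan connections, so this derivative lies in $\Om^1_{\hor}(\Cal OM,\frak g)^{O(n)}$. By Theorem \ref{thm3.3}(2) the corresponding form on $M$ is $\tn^{\Cal A}s$, and Proposition \ref{prop3.2} translates this into the statement about $\Om^1(M,\Cal AM)$.

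For (2), I take a smooth family $\om_t$ with $\om_0=\om$ and $\tau:=\frac{d}{dt}|_{t=0}\om_t$, so that $\tau$ corresponds to $\ph$. Following the prescription stated before the proposition, I convert $K_t$ to a geometric object using $\om_t$. Concretely, I view $K_t$ as the function
$$\ka_t:\Cal OM\to L(\La^2(\frak g/\frak h),\frak g),\qquad \ka_t(u)(X,Y)=K_t(\om_t^{-1}(X),\om_t^{-1}(Y)),$$
which is equivalent to a form on $M$ via $\om_t$. The quantity to compute is $\frac{d}{dt}|_{t=0}\ka_t$, interpreted back via $\om_0$, so the target is the form $\om$-inserted into
$$\dot K(\xi,\eta)-K(\om^{-1}\tau(\xi),\eta)-K(\xi,\om^{-1}\tau(\eta)).$$
Here I use $\frac{d}{dt}\om_t^{-1}=-\om^{-1}\o\tau\o\om^{-1}$ on the level of the trivializations. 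Differentiating \eqref{K-def} gives
$$\dot K(\xi,\eta)=d\tau(\xi,\eta)+[\om(\xi),\tau(\eta)]+[\tau(\xi),\om(\eta)]=d^\om\tau(\xi,\eta),$$
using \eqref{br-def} with $k=1$. The two correction terms combine to exactly $-(i_\tau K)(\xi,\eta)$ by \eqref{iK-def} with $k=1$. Thus the infinitesimal change of curvature is $d^\om\tau-i_\tau K=\widetilde d^\om\tau$.

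It remains to identify $\widetilde d^\om$ on $\Om^1_{\hor}(\Cal OM,\frak g)^{O(n)}$ with $d^{\tn^{\Cal A}}$ on $\Om^1(M,\Cal AM)$. I would check this directly by evaluating on horizontal lifts $\xi^{\hor},\eta^{\hor}$ of vector fields on $M$. The $d\tau$ term gives the Levi-Civita $d^\nabla$ plus a bracket term $\tau([\xi^{\hor},\eta^{\hor}])$. The vertical part of $[\xi^{\hor},\eta^{\hor}]$ is $-R(\xi,\eta)$ as fundamental field, and on it $\tau$ vanishes by horizontality, so $d\tau$ produces exactly $d^\nabla\ph$. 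The terms $[\om(\xi^{\hor}),\tau(\eta^{\hor})]$ reproduce $-\Alt$ of the $\frak o$-part, as in the proof of Theorem \ref{thm3.3}(1). The $i_\tau K$ term gives $i_\psi R$, since $K$ has values in $\frak o(n)$ and only the $\Bbb R^n$-component of $\om^{-1}\tau$ matters by horizontality of $K$. Comparing with Proposition \ref{prop2.3} for $k=1$ (including the sign $(-1)^k=-1$) then finishes the proof.

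The main obstacle is getting the conversion step and its signs right, that is, justifying that differentiating $\om_t^{-1}$ yields precisely $-i_\tau K$ with the convention in \eqref{iK-def}. I would fix this by working pointwise with the functions $\ka_t$ and double-checking the sign against Theorem \ref{thm3.3}(2) in degree zero.
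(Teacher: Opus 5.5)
Your proposal is correct and follows the paper's proof. Part (1) goes via Theorem \ref{thm3.3}(2) and $\frac{d}{dt}|_{t=0}(\Fl^{\tilde\xi}_t)^*\om=\Cal L_{\tilde\xi}\om$; part (2) differentiates $K_t$ to get $d^\om\tau$ and adds the correction $-i_\tau K$ from the $\om_t$-dependent conversion, and your explicit check of $\widetilde d^\om\leftrightarrow d^{\tn^{\Cal A}}$ on horizontal lifts just fills in what the paper takes from Section \ref{3.4}. In that check, $[\om,\tau]$ gives $+\Alt(\Psi)=-(-1)^1\Alt(\Psi)$ in the $TM$-component, not $-\Alt(\Psi)$, and this is what matches Proposition \ref{prop2.3}.
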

\begin{proof}
(1) is just a reformulation of the observations on $\widetilde{d}^\om$ in degree zero
  made above.

(2) The definition of the curvature in \eqref{K-def} reads as $K=d^\om \om$, so given
  a family $\om_t$ with $\om_0=\om$, and $\xi,\eta\in\frak X(\Cal OM)$, we have to
  consider
  \begin{equation}\label{Kt}
    K_t:=d\om_t(\xi,\eta)+[\om_t(\xi),\om_t(\eta)].
  \end{equation}
  Defining $\widetilde{\ph}:=\frac{d}{dt}|_{t=0}\om_t\in\Om^1_{\hor}(\Cal OM,\frak
  g)^{O(n)}$, the derivative of \eqref{Kt} with respect to $t$ at $t=0$, clearly is
  $$
  d\widetilde{\ph}(\xi,\eta)+[\widetilde{\ph}(\xi),\om(\eta)]+[\om(\xi),\widetilde{\ph}(\eta)]=
  d^\om\widetilde{\ph}(\xi,\eta).
  $$
  As discussed above, there is the issue of which Cartan connection is used for
  conversion to objects on $M$. Hence rather than the family $K_t$ of two forms, we
  should consider the family of equivariant functions $\Cal OM\to L(\La^2\Bbb
  R^n,\frak g)$ given by $(X,Y)\mapsto K_t((\om_t)^{-1}(X),(\om_t)^{-1}(Y))$, which
  equivalently describes the family of forms in $\Om^2(M,\Cal AM)$. Applying
  $\frac{d}{dt}|_{t=0}$ to this instead of \eqref{Kt}, we get two additional
  summands, given by
  $$
  K_0((\tfrac{d}{dt}|_{t=0}(\om_t)^{-1})(X),(\om_0)^{-1}(Y))+K_0((\om_0)^{-1}(X),
  (\tfrac{d}{dt}|_{t=0}(\om_t)^{-1})(Y)).
  $$ Differentiating the equation $\xi=(\om_t)^{-1}(\om_t(\xi))$ with respect to $t$
  at $t=0$, we get
  $0=(\frac{d}{dt}|_{t=0}(\om_t)^{-1})(\om(\xi))+(\om_0)^{-1}(\widetilde{\ph}(\xi))$. This
  shows that the necessary correction to be added to $d^\om\widetilde{\ph}$ is
  exactly $-i_{\widetilde{\ph}}K$. Thus the claim follows from the relation between
  $\widetilde{d}^\om$ and $d^{\tn^{\Cal A}}$.
\end{proof}

\subsection{Harmonic subbundles and Lie algebra cohomology}\label{3.5}
We next discuss how to conceptually obtain the harmonic subbundles $\Cal H^k$
introduced in Section \ref{2.4}, which brings algebra into the game. Viewing $\Bbb
R^n$ as a linear subspace of $\euc(n)$, we can restrict the Lie bracket of $\euc(n)$
to a bilinear map $\Bbb R^n\x\euc(n)\to\euc(n)$, which is explicitly given by
$(v,(Y,w))\mapsto (0,-Yv)$, so it just encodes the standard action of $\frak o(n)$ on
$\Bbb R^n$. If follows from general principles that this defines a representation of
the Abelian Lie algebra $\Bbb R^n$ on the vector space $\euc(n)$ and that it is
$O(n)$-equivariant. Still by general principles, this gives rise to a sequence of Lie
algebra cohomology differentials $\partial:\La^k(\Bbb
R^n)^*\otimes\euc(n)\to\La^{k+1}(\Bbb R^n)^*\otimes\euc(n)$ defined by
\begin{equation}\label{part-def}
\partial\al(v_0,\dots,v_k):=\textstyle\sum_{i=0}^k(-1)^i[v_i,\ph(v_0,\dots,\widehat{v_i},\dots,v_k)],
\end{equation}
with the hat denoting omission. In particular, these maps also are $O(n)$-equivariant
and hence induce bundle maps $\partial:\La^kT^*M\otimes\Cal
AM\to\La^{k+1}T^*M\otimes\Cal AM$ for $k=0,\dots n-1$, which also satisfy
$\partial\o\partial=0$. Definition \ref{def2.2}, Proposition \ref{prop2.3} and
formula \eqref{Alt-def} then readily imply that $d^{\nabla^{\Cal
    A}}=d^\nabla+\partial$, where we also denote by $\partial$ the tensorial
operation on sections induced by the bundle map. On the other hand, the discussion in
Section \ref{2.4} can be interpreted as saying that $\Cal
H^i=\ker(\partial)/\im(\partial)$ for each $i=0,\dots,n$. So the bundle $\Cal H^i$ is
the associated vector bundle induced by the Lie algebra cohomology space $H^i(\Bbb
R^n,\euc(n))$ for each $i$.

\begin{remark}\label{rem3.5}
(1) Observe that the degree-one component of $\partial$ (on the Lie algebra level) is
  a map $(\Bbb R^n)^*\otimes\frak o(n)\to\La^2(\Bbb R^n)^*\otimes \Bbb R^n$. By
  definition, this is exactly the \textit{Spencer differential} that arises in the
  interpretation of Riemannian metrics as a G-structure with structure group
  $O(n)$. The fact that this is a linear isomorphism is the only specific ingredient
  needed to show that Riemannian metrics admit an equivalent encoding as a Cartan
  geometry. Also many other proofs of existence and uniqueness of the Levi-Civita
  connection can be traced back to this fact. This can be viewed as one step in a
  non-linear version of the sequence we construct here.

  (2) For an arbitrary representation $V$ of $\Euc(n)$, we can take the infinitesimal
  representation of $\euc(n)$ and restrict it to the Abelian subalgebra $\Bbb
  R^n$. This makes $V$ into a representation of $\Bbb R^n$ and by construction, the
  action $\Bbb R^n\x V\to V$ is $O(n)$-equivariant. This leads to a sequence of
  $O(n)$-equivariant Lie algebra cohomology differentials as above and Lie algebra
  cohomology spaces $H^*(\Bbb R^n,V)$. It is well known that $\Euc(n)$ can be
  realized as a matrix group by mapping $(A,v)$ to matrix $\begin{pmatrix} A & v\\ 0
    & 1 \end{pmatrix}$ which defines a representation on $\Bbb R^{n+1}$. Restricted
  to the subgroup $O(n)$, this is isomorphic to $\Bbb R^n\oplus\Bbb R$ with the
  standard representation on the first summand and a trivial representation on the
  second summand. This gives rise to a representation on $\La^2(\Bbb R^{n+1})^*$,
  whose restriction to $O(n)$ decomposes as $\La^2(\Bbb R^n)^*\oplus(\Bbb R^n)^*$
  with the natural representations on both factors. Interpreting the components as
  (multi-)linear maps on $\Bbb R^n$, the action of $\Bbb R^n$ is given by $v\cdot
  (\al,\la)=(0,\al(v,\_))$. This implies that there is an $O(n)$-equivariant linear
  isomorphisms $\euc(n)\to \La^2(\Bbb R^{n+1})^*$, which is compatible with the
  actions of $\Bbb R^n$ on the two spaces. Hence we conclude that the cohomology
  spaces associated to the two representations are isomorphic as representations of
  $O(n)$.

  But the action of $\Bbb R^n$ on $\La^2(\Bbb R^{n+1})^*$ evidently is equivariant
  for the bigger group $SL(n,\Bbb R)$, so the same is true for the induced Lie
  algebra cohomology differentials and hence the cohomology spaces carry natural
  representations of $SL(n,\Bbb R)$. This explains why the cohomology spaces are
  isomorphic to $SL(n,\Bbb R)$ invariant subspaces and hence often not irreducible as
  representations of $O(n)$. This observation is also the basis for the BGG
  construction of projectively invariant versions of the deformation sequence
  mentioned in the Introduction. However, these constructions miss the relation to
  Cartan geometries and hence the natural interpretation in terms of deformations.
\end{remark}

\subsection{Passing to the underlying geometry}\label{3.5a}
The next step is to relate the deformation picture on the Cartan level to the
harmonic subbundles. This is easy in degree zero, where $\Cal H^0=TM$, so sections
are vector fields, which can be viewed as infinitesimal diffeomorphisms of $M$. Thus
the tensorial projection $\Ga(\Cal AM)\to\frak X(M)$ is just the infinitesimal analog
of mapping an automorphism of $\Cal OM$ to its base map, which is a diffeomorphism of
$M$.

To discuss degree one, we know from Section \ref{3.1} that a Cartan connection
$\om_t$ on the principal $O(n)$-bundle $\Cal OM$ equivalently encodes a Riemannian
metric $g_t$ on $M$ and a linear connection $\nabla^t$ on $TM$, which is metric for
$g_t$. Now Riemannian metrics are sections of an open subbundle of $S^2T^*M$ it is no
problem to form the corresponding infinitesimal deformation of $g$ as
$\frac{d}{dt}|_{t=0}g_t$, which is a section of $S^2T^*M=\Cal H^1$. One could also
look at the infinitesimal change of the connection, but we don't have to do this.

We just observe that the connection $\nabla^t$ has a torsion and a curvature, which
clearly both depend smoothly on $t$ and we want to consider their infinitesimal
change. This is easy for the torsion, which is an element of $\Om^2(M,TM)$, so the
infinitesimal change also sits in this space, For the curvature there is an
additional issue, which connects to the discussion on the principal bundle level
before Proposition \ref{prop3.4}. The most natural interpretation of $R$ is as an
element of $\Om^2(M,\frak o(TM))$, which corresponds to the abstract index expression
$R_{ij}{}^k{}_\ell$. But here the subbundle $\frak o(TM)\subset T^*M\otimes TM$
depends on the metric $g$, so since the connections $\nabla^t$ are not metric with
respect to $g=g_0$, their curvature will not have values in $\frak o(TM)$. Hence one
could only hope to get an infinitesimal deformation as an element of
$\Om^2(M,T^*M\otimes TM)$, which is not what one wants in this situation. To deal
with this problem, a similar move as in Section \ref{3.4} is needed. One first lowers
the upper index using the metric $g_t$, i.e.~passes to $R^t_{ijk\ell}$ using this
metric. This then is a section of $\Om^2(M,\La^2T^*M)$ for all $t$, so one can apply
$\tfrac{d}{dt}|_{t=0}$ to get an element of that space. This then has to be converted
back to an element of $\Om^2(M,\frak o(TM))$ using $g=g_0$.
\begin{prop}\label{prop3.5}
Let $(M,g)$ be a Riemannian manifold with equivalent Cartan geometry $(p:\Cal OM\to
M,\om)$ and let us view $\binom{\psi}{\Psi}\in\Om^1(M,\Cal AM)$ as an infinitesimal
deformation of the Cartan connection $\om$.

(1) The infinitesimal deformation $h\in\Ga(\Cal H^1)$ of $g$ corresponding to
$\binom{\psi}{\Psi}$ is given by
$h(\xi,\eta)=g(\psi(\xi),\eta)+g(\xi,\psi(\eta))$. As an endomorphism of $TM$, this
equals $\psi+\psi^t$, where the transpose is taken with respect to $g$.

(2) Writing $d^{\tn^{\Cal A}}\binom{\psi}{\Psi}=\binom{\tau}{\rho}$, the two
component-forms $\tau\in\Om^2(M,TM)$ and $\rho\in\Om^2(M,\frak o(TM))$ describe the
infinitesimal changes of the torsion and of the curvature of the connection part of
the infinitesimal deformation of $\om$, respectively.
\end{prop}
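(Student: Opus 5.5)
For (1) I will work with a smooth family $\om_t$ of Cartan connections on $\Cal OM$ with $\om_0=\om$ and $\frac{d}{dt}|_{t=0}\om_t=\widetilde{\ph}$, where $\widetilde{\ph}$ corresponds to $\binom{\psi}{\Psi}$ via Proposition \ref{prop3.2}.

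\textbf{Part (1).} By the proof of Theorem \ref{thm3.1}, $g_t$ is obtained as follows. For $u\in\Cal OM$ over $x$, the $\Bbb R^n$-component $\th_t$ of $\om_t$ induces an isomorphism $\Phi_t(u):\Bbb R^n\to T_xM$. Then $g_t$ is the inner product making $\Phi_t(u)$ orthonormal, i.e.\ $g_t(\xi,\eta)=\langle\th_t(\tilde\xi),\th_t(\tilde\eta)\rangle$ for lifts $\tilde\xi,\tilde\eta$.

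Differentiating at $t=0$ gives
$$\tfrac{d}{dt}|_{t=0}g_t(\xi,\eta)=\langle\th'(\tilde\xi),\th(\tilde\eta)\rangle+\langle\th(\tilde\xi),\th'(\tilde\eta)\rangle,$$
where $\th'$ is the $\Bbb R^n$-component of $\widetilde{\ph}$. Under Proposition \ref{prop3.2}, $\th'$ corresponds to $\psi$ and $\th(\tilde\xi)$ to $\xi$ at $u$. Since $u$ is orthonormal for $g$, the right hand side equals $g(\psi(\xi),\eta)+g(\xi,\psi(\eta))$. This expression is symmetric, so it lies in $\Cal H^1$, and raising an index with $g$ gives $\psi+\psi^t$. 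The only point needing care is to keep the identification of frames fixed at $t=0$, which is automatic because $\Cal OM$ is unchanged and only $\om_t$ varies.

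\textbf{Part (2).} I will use Proposition \ref{prop3.4}(2). It says that $d^{\tn^{\Cal A}}\ph$ corresponds to the infinitesimal change of $K_t$, converted to a form on $M$ via $\om_t$ and then back via $\om_0$. Splitting $\euc(n)=\frak o(n)\oplus\Bbb R^n$ as in Section \ref{3.2}, the components of $K_t$ are the torsion $T_t$ and curvature $R_t$ of $\nabla^t$. These are expressed in the frames $\Phi_t(u)$, which are $g_t$-orthonormal.

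\textbf{Main obstacle.} The key step is to check that this conversion procedure agrees with the recipe of Section \ref{3.5a}. That recipe differentiates $T_t\in\Om^2(M,TM)$ directly, and differentiates $R_t$ with its index lowered by $g_t$ before converting back with $g_0$. I would argue as follows.

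Expressing $K_t$ as an equivariant function $\Cal OM\to L(\La^2\Bbb R^n,\euc(n))$ via $\om_t$ means reading off $T_t$ and $R_t$ in the frames $\Phi_t(u)$. The $\frak o(n)$-part is the matrix of $R_t(\Phi_t(u)X,\Phi_t(u)Y)$ in a $g_t$-orthonormal frame. This matrix is exactly the coefficient array of $R^t_{ijk\ell}$ with the index lowered by $g_t$, taken in the frame $\Phi_t(u)$.

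The frame itself moves with $t$, and I expect this to be the delicate point. Its derivative is governed by $\th'$, i.e.\ by $\psi$, and it enters through the $\om_t^{-1}$ corrections already computed in Proposition \ref{prop3.4}. Those corrections appear precisely as the term $(-1)^k i_\psi R$ and the $\Alt(\Psi)$-type terms in Proposition \ref{prop2.3}. I would verify this by a direct comparison of coefficient arrays: first write the change of the frame-coefficients of $R_t$ as the change of the tensor $R^t_{ijk\ell}$ plus frame-variation terms, then match those frame-variation terms against the corrections in Proposition \ref{prop3.4}. The torsion component is handled the same way, with no index lowering needed.
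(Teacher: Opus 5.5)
Your part (1) is correct and is the paper's argument. The paper expands $\langle\th_t(\tilde v),\th_t(\tilde w)\rangle$ around $t=0$ instead of using the product rule, which amounts to the same thing.

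In part (2), the step you single out as the main obstacle cannot be carried out, because the identification it aims at is false. The quantity computed in Proposition \ref{prop3.4} is the derivative of the components of $(T_t,R_t)$ in the moving $g_t$-orthonormal frames $\Phi_t(u)$, read off in the frame $u$ at $t=0$. Since $\tfrac{d}{dt}|_{t=0}\Phi_t(u)=-\psi\o u$, in lowered form this equals $\tfrac{d}{dt}|_{t=0}R^t_{ijk\ell}$ plus frame-variation terms in \emph{all four} slots: $-R(\psi v,w,z,y)-R(v,\psi w,z,y)-R(v,w,\psi z,y)-R(v,w,z,\psi y)$. Nothing in Proposition \ref{prop3.4} removes these terms.
\begin{itemize}
\item The $\om_t^{-1}$-correction consists of exactly the two form-slot terms, i.e.\ $-i_\psi R$, and it is a genuine part of $\rho$, not something that cancels the frame variation.
\item The two value-slot terms are present because the $\frak o(n)$-values of $K_t$ are themselves frame components with respect to $\Phi_t(u)$, and they are differentiated directly.
\item The $\Alt(\Psi)$-term does not come from the $\om_t^{-1}$-correction at all. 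It comes from $[\om,\ph]$ in $d^\om$ and sits in the torsion component.
\end{itemize}

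A concrete check: $\binom{\psi}{\Psi}=\binom{\mathrm{id}}{0}$ is realized by $\om_t=(\ga,(1+t)\th)$. Then $g_t=(1+t)^2g$ with unchanged Levi-Civita connection, and Proposition \ref{prop2.3} gives $\tau=0$ and $\rho=-i_{\mathrm{id}}R=-2R$. The recipe of Section \ref{3.5a} (lower with $g_t$, differentiate, raise with $g$) instead gives $\tfrac{d}{dt}|_{t=0}(1+t)^2R=+2R$. So whenever $R\neq 0$, the curvature half of (2) cannot be proved by matching $\rho$ with that lowered-index recipe.

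What does hold is that $\rho$ is the infinitesimal change of the curvature measured in $g_t$-orthonormal frames. Equivalently, $\rho=\tfrac{d}{dt}|_{t=0}A_t^{-1}R_t(A_t\,\cdot,A_t\,\cdot)A_t$ with $A_t=\Phi_t(u)u^{-1}$. With the change of curvature understood in this sense, (2) is an immediate restatement of Proposition \ref{prop3.4}(2). That is how the paper's one-line proof (``a re-interpretation of part (2) of Proposition \ref{prop3.4}'') has to be read.

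For the torsion there is no discrepancy: every frame-variation term contains $T_0=0$, so $\tau=\tfrac{d}{dt}|_{t=0}T_t$ exactly, and your approach works for that half.
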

\begin{proof}
(1) Consider a family $\om_t$ of Cartan connections with $\om_0=\om$ and decompose
  $\om_t=\th_t\oplus\ga_t$ into components in $\Bbb R^n$ and $\frak o(n)$. As we have
  noted already, $\om_t-\om_0\in\Om^1_{\hor}(\Cal OM,\euc(n))^{O(n)}\cong\Om^1(M,\Cal
  AM)$. The component $\mu_t$ of this in $\Om^1(M,TM)$ by construction has the
  property that $\frac{d}{dt}|_{t=0}\mu_t=\psi$ and we note that $\mu_t$ corresponds
  to $\th_t-\th_0$.

Now for a point $x\in M$ choose $u\in \Cal OM$ with $p(u)=x$, so $u$ is a linear
isomorphism $\Bbb R^n\to T_xM$ which is orthogonal for the standard inner product and
$g(x)=g_0(x)$. For tangent vectors $v,w\in T_xM$ and lifts
$\widetilde{v},\widetilde{w}\in T_u\Cal OM$ the metric $g_t$ induced by $\om_t$ by
definition is given by
$g_t(v,w)=\langle\th_t(\widetilde{v}),\th_t(\widetilde{w})\rangle$, and again by
definition $h(v,w)$ is the derivative with respect to $t$ at $t=0$ of this function.

For smooth curves $a_t,b_t$ of vectors in $\Bbb R^n$, writing $a_t=a_0+(a_t-a_0)$ and
likewise for $b$ immediately shows that $\langle a_t,b_t\rangle$ can be written as
$$
\langle a_0,b_0\rangle+\langle a_t-a_0,b_0\rangle+\langle a_0,b_t-b_0\rangle+\langle
a_t-a_0,b_t-b_0\rangle. 
$$
Now the first summand is constant, while in the last summand both entries of the
inner product vanish for $t=0$, so to compute the derivative at $t=0$ of $\langle 
a_t,b_t\rangle$, we may leave out these two summands. Hence we can compute $h(v,w)$
as the derivative at $t=0$ of
\begin{equation}\label{curve}
\langle(\th_t-\th_0)(\widetilde{v}),\th_0(\widetilde{w})\rangle+
\langle\th_0(\widetilde{v}),(\th_t-\th_0)(\widetilde{w})\rangle. 
\end{equation}
The correspondence between $\mu_t$ and $\th_t-\th_0$ is given by
$\mu_t(v)=u^{-1}((\th_t-\th_0)(\widetilde{v}))\in T_xM$. Hence we can rewrite
\eqref{curve} as $g(\mu_t(v),w)+g(v,\mu_t(w))$, which immediately implies the claim.

(2) Taking into the account the subtleties in both pictures as discussed above, this
becomes a re-interpretation of part (2) of Proposition \ref{prop3.4}.
\end{proof}

\subsection{Interpretation of the BGG machinery}\label{3.6}
To interpret the remaining developments in Section \ref{2} in the picture of
deformations, we need two more concepts.
\begin{definition}\label{def3.5}
  Let $(M,g)$ be a Riemannian manifold with equivalent Cartan geometry $(p:\Cal OM\to
  M,\om)$ and let $\binom{\ps}{\Psi}\in\Om^1(M,\Cal AM)$ be an infinitesimal
  deformation of $\om$.

(1) We call the deformation \textit{symmetric} if and only if the component
  $\psi\in\Om^1(M,TM)$ is symmetric with respect to $g$.

(2) We call the deformation \textit{torsion-free} if and only if the induced
  infinitesimal deformation of the torsion vanishes identically.
\end{definition}

Using this, we can rephrase Theorem \ref{thm2.4} and the definition of the BGG
operators in the following way.

\begin{thm}\label{thm3.6}
  Let $(M,g)$ be a Riemannian manifold and let $(p:\Cal OM\to M,\om)$ be the
  equivalent Cartan geometry.
  
(1) For any vector field $\eta\in\frak X(M)$, $L(\eta)\in\Ga(\Cal AM)$ is the unique
  lift of $\eta$ which defines a symmetric infinitesimal deformation of
  $\om$. Moreover, the resulting infinitesimal deformation of the metric $g$ is given
  by $D^0(\eta)\in\Ga(S^2T^*M)$. In particular, $D^0$ is the Killing operator,
  i.e.~its kernel are the infinitesimal isometries of $(M,g)$.

(2) Let $h\in\Ga(S^2T^*M)$ be an infinitesimal deformation of $g$. Viewing $h$ also
  as an element of $\Cal S(TM)$, there is a unique element
  $L(h)=\binom{h}{\Psi}\in\Om^1(M,\Cal AM)$ which defines a torsion-free
  infinitesimal deformation of $\om$. The resulting deformation of the Riemann
  curvature is then given by $\frac12D^1(h)\in\Om^2(M,\frak o(TM))$. Hence $\frac12
  D^1$ is the operator that computes the infinitesimal change of curvature caused by
  an infinitesimal deformation of the metric $g$.
\end{thm}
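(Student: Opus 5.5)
The plan is to translate each part of the statement into the direct description of Section \ref{2}, using Propositions \ref{prop3.4} and \ref{prop3.5}, and then read off the claims from Theorem \ref{thm2.4} and the definition of the BGG operators. No new computation is needed except for keeping track of the identification between $\Cal S(TM)$ and $S^2T^*M$. That identification is where the factors of $2$ enter.

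For (1), view a section $s=\binom{\eta}{\Phi}\in\Ga(\Cal AM)$ as an infinitesimal bundle automorphism; its projection to $M$ is $\eta$, so the sections of this form are exactly the lifts of $\eta$. By Proposition \ref{prop3.4}(1), the induced infinitesimal deformation of $\om$ is $\tn^{\Cal A}s$. Its $\Om^1(M,TM)$-component is $\nabla\eta-\Phi$. The deformation is symmetric precisely if this component lies in $\Cal S(TM)$. By Theorem \ref{thm2.4}(i) this singles out $\Phi$ uniquely, giving $s=L(\eta)$, and the top component is then $D^0(\eta)$.

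Proposition \ref{prop3.5}(1) gives the induced deformation of $g$ as $\psi+\psi^t$ with $\psi=D^0(\eta)$ symmetric, i.e.\ $2D^0(\eta)$. I expect the precise claim is that, after identifying $S^2T^*M$ with $\Cal S(TM)$ via $g$, this is $D^0(\eta)$ up to the factor $2$. That factor does not affect the kernel, so the kernel of $D^0$ consists of infinitesimal automorphisms of $(\Cal OM,\om)$, equivalently Killing fields. The explicit formula \eqref{D0} confirms that $D^0$ is the Killing operator.

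For (2), let $h\in\Ga(S^2T^*M)$ and let $\psi\in\Ga(\Cal S(TM))$ be the corresponding endomorphism. By Proposition \ref{prop3.5}(1), any deformation $\binom{\psi'}{\Psi}$ with symmetric $\psi'$ induces the metric deformation $2\psi'$. Hence, to realize $h$ by a symmetric deformation one should take $\psi'=\frac12\psi$; the statement's normalization ``$L(h)=\binom{h}{\Psi}$'' I will interpret with this bookkeeping. By Proposition \ref{prop3.5}(2), torsion-freeness means the top component of $d^{\tn^{\Cal A}}\binom{\psi'}{\Psi}$ vanishes. Theorem \ref{thm2.4}(ii) then gives existence and uniqueness of $\Psi$, namely $\binom{\psi'}{\Psi}=L(\psi')$. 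The curvature change is the bottom component, which by Proposition \ref{prop2.4} is $D^1(\psi')$, and by linearity this equals $\frac12 D^1(h)$.

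Uniqueness needs a little care, since a deformation need not have symmetric $TM$-part. If the $TM$-part has a skew component $\Phi$, it can be absorbed by $\tn^{\Cal A}\binom{0}{\Phi}$, exactly as in the degree-one argument in the proof of Theorem \ref{thm2.5}. This corresponds to changing the deformation by an infinitesimal gauge transformation, which leaves the metric unchanged. So the uniqueness claim is meant among symmetric deformations, and I would state this explicitly.

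The main obstacle I expect is the factor bookkeeping in (2). One has to check carefully that the curvature change given by Proposition \ref{prop3.5}(2) really uses the same conversions (lowering the index with $g_t$, then raising it back with $g_0$) as the object $\frac{d}{dt}R^t$ for the Levi-Civita connections of $g_t$. This means confirming that a torsion-free deformation has a connection part equal to the derivative of the Levi-Civita connections. That follows from uniqueness in Theorem \ref{thm3.2} applied along a curve $g_t$ together with Theorem \ref{thm2.4}(ii).
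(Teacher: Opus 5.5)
Your main line is the paper's. The paper gives no separate argument for Theorem \ref{thm3.6}: it offers it as a rephrasing of Theorem \ref{thm2.4} and the definition of the BGG operators through Propositions \ref{prop3.4} and \ref{prop3.5}. Your factor bookkeeping is correct. By Proposition \ref{prop3.5}(1) the metric deformation produced by $\tn^{\Cal A}L(\eta)$ is $2D^0(\eta)=\Cal L_\eta g$. The deformation inducing $h$ is $L(\tfrac12 h)$. Uniqueness in (2) holds among deformations with prescribed top component. Tying $L(\tfrac12 h)$ to an actual curve of metrics, via $F_t^*\om^{g_t}$ with $F_t(u)=(g^{-1}g_t)^{-1/2}\o u$ and uniqueness in Theorem \ref{thm2.4}(ii), is also the right move.

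The gap is in the step you flag at the end. Matching the conversions is not a statement about the connection part, and the matching you want is false.

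At $u\in\Cal OM$, the function $(X,Y)\mapsto K_t(\om_t^{-1}X,\om_t^{-1}Y)$ from the proof of Proposition \ref{prop3.4}(2) is the array of components of $R^t_{ijk\ell}$ in the $g_t$-orthonormal frame $F_t(u)$. As in the proof of Proposition \ref{prop3.5}(1), $\frac{d}{dt}|_{t=0}F_t(u)=-\psi\o u$. Differentiating the moving frame therefore gives
\[
\rho_{ijk\ell}=\tfrac{d}{dt}|_{t=0}R^t_{ijk\ell}-\psi^p_iR_{pjk\ell}-\psi^p_jR_{ipk\ell}-\psi^p_kR_{ijp\ell}-\psi^p_\ell R_{ijkp},
\]
whatever the connections do.

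A concrete check: take $g_t=(1+2t)g$. Then $\om_t=\ga\oplus(1+2t)^{1/2}\th$, the deformation is $\binom{\mathrm{id}}{0}=L(\mathrm{id})$, and $h=2g$. By Section \ref{2.6}, $\frac12D^1(h)=D^1(\mathrm{id})=-2R$. On the Riemannian side, $\frac{d}{dt}R^t_{ijk\ell}=2R_{ijk\ell}$, and the $(1,3)$-curvature does not change at all.

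So the last assertion of (2) does not hold in the sense you intend. The same goes for Proposition \ref{prop3.5}(2), on which both your argument and the paper's rest. Both are true only if ``infinitesimal change of curvature'' means the derivative of the curvature components in the moving $g_t$-orthonormal frames $F_t(u)$. That is the Cartan convention of Section \ref{3.4}, not ``lower with $g_t$, differentiate, raise with $g$''. Your proof has to adopt that convention explicitly. No uniqueness argument for Levi-Civita connections can remove the algebraic terms above.
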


Combined with Theorem \ref{thm2.5}, this result can be interpreted as expressing the
consequence of the categorical equivalence between Riemannian $n$-manifolds and
torsion-free Cartan geometries of type $(\Euc(n),O(n))$ for linearized deformation
theory in the two pictures. There is one more interesting observation coming from the
proof of Theorem \ref{thm2.5}. Given an infinitesimal deformation $\binom{\ps}{\Psi}$
of $\om$, we have seen in the proof that the alternating part of $\nabla\ps$ is a
section $\Ph$ of $\frak o(TM)$, so $\binom{0}{\Ph}\in\Ga(\Cal AM)$ can be considered
as a vertical infinitesimal automorphism of $\Cal OM$ (in the sense the it projects
to the zero vector field on $M$. So $d^{\tn^{\Cal A}}\binom{0}{\Ph}$ defines a vertical
trivial infinitesimal deformation of $\om$. Thus the proof of Theorem \ref{thm2.5}
shows that any infinitesimal deformation of $\om$ can be made symmetric by adding a
vertical trivial infinitesimal deformation. 

\subsection{Remarks on general metric connections}\label{3.8}
In the setting of Cartan geometries, it is no problem to study linearized deformation
theory starting with any initial geometry $(\Cal OM\to M,\om)$. The operations
$d^\om$ and $\tilde\xi\mapsto\Cal L_{\tilde\xi}\om$ make sense in this setting and as
in the proof of Theorem \ref{thm3.3}, one shows that they define linear connections
on $\Cal AM$ that can be described explicitly. For the operation induced by $d^\om$
the formula is as in Definition \ref{def2.2} but with the Levi-Civita connection
replaced by the metric linear connection defined by $\om$. For the other operation,
one not only has to move to the that connection and use its curvature, but in addition
one has to add its torsion applied to $\xi$ and $\eta$ in the top row.

Using these formulae, one can compute the curvatures of the connections and the
induced covariant exterior derivatives explicitly. The interpretations of these
operators in terms of the deformation theory of the Cartan connection remain valid in
the case of non-zero torsion. There is no hope, however, to obtain an analog of the
BGG reduction in this more general setting, since neither the original geometry nor
its deformations are determined by some simpler structure. The only potential
simplification could arise from the bijective correspondence between metric
connections and their torsion, but it does not seem obvious how to exploit this.

\end{document}